\nonstopmode \numberwithin{equation}{section}
\newcommand*\bigcdot{\mathpalette\bigcdot@{.5}}
\newcommand*\bigcdot@[2]{\mathbin{\vcenter{\hbox{\scalebox{#2}{$\m@th#1\bullet$}}}}}
\newtheorem{thm}{Theorem}%[section]
\newtheorem{lem}{Lemma}%[section]
\newtheorem{cor}{Corollary}[section]
\newtheorem{cl}{Claim}%[section]
\newtheorem{ca}{Case}%[section]
\newtheorem{sca}{Subcase}%[section]
\newtheorem{scl}{Subclaim}%[section]
\newtheorem{conj}{Conjecture}
\theoremstyle{definition}
\newtheorem{defn}{Definition}%[section]
\newtheorem{op}[equation]{Open Problem}
\newtheorem{ques}[equation]{Question}
\newtheorem{rem}{Remark}[section]
\newtheorem{exam}[equation]{Example}
\newcounter {own}
\def\theown {\thesection       .\arabic{own}}
\newenvironment{pf}[1][]{%
	\vskip 3mm
	\noindent
	\ifthenelse{\equal{#1}{}}%
	{{\slshape Proof. }}%
	{{\slshape #1.} }%
}%
{\qed\bigskip}
\newcounter{alphabet}
\newenvironment{Thm}[1][]{\refstepcounter{alphabet}%
	\bigskip%
	\noindent%
	{\bf Theorem \Alph{alphabet}}%
	\ifthenelse{\equal{#1}{}}{}{ (#1)}%
	{\bf .} \itshape}{\vskip 8pt}
\newenvironment{Lem}[1][]{\refstepcounter{alphabet}%
	\bigskip%
	\noindent%
	{\bf Lemma \Alph{alphabet}}%
	{\bf .} \itshape}{\vskip 8pt}
\newcommand{\C}{{\mathbb C}}
\newcommand{\D}{{\mathbb D}}
\newcommand{\T}{{\mathbb T}}
\newcommand{\R}{{\mathbb R}}
\def\be{\begin{equation}}
	\def\ee{\end{equation}}
\newcommand{\bee}{\begin{enumerate}}
	\newcommand{\eee}{\end{enumerate}}
\newcommand{\blem}{\begin{lem}}
	\newcommand{\elem}{\end{lem}}
\newcommand{\bthm}{\begin{thm}}
	\newcommand{\ethm}{\end{thm}}
\newcommand{\bcor}{\begin{cor}}
	\newcommand{\ecor}{\end{cor}}
\newcommand{\beg}{\begin{exam}}
	\newcommand{\eeg}{\end{exam}}
\newcommand{\begs}{\begin{examples}}
	\newcommand{\eegs}{\end{examples}}
\newcommand{\bdefe}{\begin{defn}}
	\newcommand{\edefe}{\end{defn}}
\newcommand{\bprob}{\begin{prob}}
	\newcommand{\eprob}{\end{prob}}
\newcommand{\bques}{\begin{ques}}
	\newcommand{\eques}{\end{ques}}
\newcommand{\bei}{\begin{itemize}}
	\newcommand{\eei}{\end{itemize}}
\newcommand{\bcon}{\begin{conj}}
	\newcommand{\econ}{\end{conj}}
\newcommand{\bop}{\begin{op}}
	\newcommand{\eop}{\end{op}}
\newcommand{\bca}{\begin{ca}}
	\newcommand{\eca}{\end{ca}}
\newcommand{\bsca}{\begin{sca}}
	\newcommand{\esca}{\end{sca}}
\newcommand{\bcl}{\begin{cl}}
	\newcommand{\ecl}{\end{cl}}
\newcommand{\bscl}{\begin{scl}}
	\newcommand{\escl}{\end{scl}}
\newcommand{\bcons}{\begin{conjs}}
	\newcommand{\econs}{\end{conjs}}
\newcommand{\bprop}{\begin{propo}}
	\newcommand{\eprop}{\end{propo}}
\newcommand{\br}{\begin{rem}}
	\newcommand{\er}{\end{rem}}
\newcommand{\brs}{\begin{rems}}
	\newcommand{\ers}{\end{rems}}
\newcommand{\bo}{\begin{obser}}
	\newcommand{\eo}{\end{obser}}
\newcommand{\bos}{\begin{obsers}}
	\newcommand{\eos}{\end{obsers}}
\newcommand{\bpf}{\begin{pf}}
	\newcommand{\epf}{\end{pf}}
\newcommand{\ba}{\begin{array}}
	\newcommand{\ea}{\end{array}}
\newcommand{\beq}{\begin{eqnarray}}
	\newcommand{\beqq}{\begin{eqnarray*}}
		\newcommand{\eeq}{\end{eqnarray}}
	\newcommand{\eeqq}{\end{eqnarray*}}
\newcommand{\ds}{\displaystyle}
\begin{document}%%%%%%%%%%%%%%%%%%%%%%%%%%%%%%%
%\boldmath
\bibliographystyle{amsplain}
\title [] {Schwarz Lemma for  mappings satisfying Biharmonic Equations}

%\def\thefootnote{}
%\footnotetext{ \texttt{\tiny ~\jobname,
%		date: \number\day-\number\month-\number\year. }
%} \makeatletter\def\thefootnote{\@arabic\c@footnote}\makeatother

\author{Adel Khalfallah}

\address{Department of Mathematics and Statistics, King Fahd University of Petroleum and
	Minerals, Dhahran 31261, Saudi Arabia}\email{adel.khalfallah@gmail.com}

\author{Fathi Haggui}
\address{Institut Préparatoire Aux Etude d'Ingénieurs de Monastir (IPEIM)\\Université de Monastir}
\email{fathi.haggui@gmail.com}

\author{Mohamed Mhamdi}
\address{Ecole supérieure des Sciences et  de la Technologie de Hammam Sousse (ESSTHS)\\Université de Sousse}
\email{mhamdimed7@gmail.com}
\subjclass[2000]{Primary:  31A30; Secondary:  31A05, 35J25}
\keywords{ Schwarz's Lemma. Boundary Schwarz's Lemma. Landau Theorem. Biharmonic Equations. $T_2$-harmonic mappings}

\begin{abstract}
In this paper, we establish some Schwarz type lemmas for mappings $\Phi$ satisfying the inhomogeneous
biharmonic Dirichlet problem  $ \Delta (\Delta(\Phi)) = g$ in $\D$,  $\Phi=f$ on $\T$ and $\partial_n \Phi=h$ on $\T$, where $g$ is a continuous function on $\overline{\D}$, $f,h$ are continuous functions on $\T$, where $\D$ is the unit disc  of the complex plane $\C$ and $\T=\partial \D$ is the unit circle. 
To reach our aim, we start by investigating  some properties of  $T_2$-harmonic functions. Finally, we prove   a Landau-type theorem.
\end{abstract}

\maketitle
\section{Preliminaries and Main Results}

%This paper continues the study of previous work of \cite{Li} and is mainly motivated by the article \cite{Chel}.In order to state our main results, we need to recall some basic definitions and some results which motivate the present work.\\

Let $\mathbb{C}$ denote the complex plane and $\mathbb{D}$ the open unit disk in $\mathbb{C}$. Let $\mathbb{T}=\partial\mathbb{D}$ be the boundary of $\mathbb{D}$, and $\overline{\mathbb{D}} = \D \cup \mathbb{T}$, the closure of $\mathbb{D}$. Furthermore, we denote by $\mathcal{C}^m(\Omega)$ the set of all complex-valued $m-$times continuously differentiable functions from $\Omega$ into $\mathbb{C}$, where
$\Omega$ stands for a domain of $\mathbb{C}$ and $m \in \mathbb{N}$. In particular, $\mathcal{C}(\Omega):= \mathcal{C}^0(\Omega)$ denotes the set of all continuous functions in $\Omega$.\\
For a real $2 \times 2$ matrix $A$, we use the matrix norm
$$\| A\|=\sup\lbrace|Az| : |z|=1\rbrace,$$
and the matrix function
$$\lambda(A)=\inf\lbrace |Az| : |z|=1 \rbrace .$$
For $z = x+iy \in \C$, the formal derivative of a complex-valued function $\Phi = u+iv$
is given by\\
$$D_\Phi=\begin{pmatrix}
u_x && u_y\\
v_x && v_y
\end{pmatrix},$$
so that
$$\| D_\Phi\|=|\Phi_z|+|\Phi_{\overline{z}}|\ \ \ \text{and}\ \ \ \lambda(D_\Phi)=\big||\Phi_z|-|\Phi_{\overline{z}}|\big|,$$
where
$$\Phi_z=\frac{1}{2}(\Phi_x-i\Phi_y)\ \ \ \text{and}\ \ \ \ \Phi_{\overline{z}}=\frac{1}{2}(\Phi_x+i\Phi_y).$$
We use 
$$J_\Phi := \det D_\Phi = |\Phi_z|^2-|\Phi_{\overline{z}}|^2.$$

The main  objective of this paper is to establish a Schwarz-type lemma for  the solutions to the following inhomogeneous
biharmonic Dirichlet problem (briefly, IBDP):
\begin{equation}\label{ibdp}
\begin{cases}
\Delta^2 \Phi = g \ \ \  \text{in }\ \ \D,\\
\Phi\  = f \ \ \  \ \ \text{on}\ \ \mathbb{T},\\
\partial_n \Phi =h \ \ \ \ \text{on}\ \  \mathbb{T}.
\end{cases}
\end{equation}
where $$\ds\Delta=\frac{\partial^2}{\partial x^2}+\frac{\partial^2}{\partial y^2},$$ denotes the standard Laplacian and $\partial_n$ denotes the differentiation in the inward normal direction, $g \in \mathcal{C}(\overline{\D})$ and the boundary data $f$ and $h$ $\in\mathcal{C}(\mathbb{T})$.\\

We would like to mention that in \cite{Chel} and \cite{ChenZu}, the authors  have considered similar inhomogeneous biharmonic equations but with different boundaries conditions.\\
 
In order to state our main results, we introduce some necessary terminologies.
For $z,w \in \D$, let
$$G(z,w) = |z -w|^2\log\bigg|\frac{1 - z\overline{w}}{z - w}\bigg|^2- (1 - |z|^2)(1 - |w|^2),$$
and
$$P(z) = \frac{1-|z|^2}{|1 - z|^2}.$$
denote the biharmonic Green function and the harmonic Poisson kernel, respectively.\\
\\
By \cite[Theorem 1.1]{Li}, we see that all solutions 
of IBDP  (\ref{ibdp}) are given by

\begin{equation}
\Phi(z) = F_0[f](z) + H_0[h](z)-G[g](z),\nonumber
\end{equation}

\noindent where
$$F_0[f](z) =\frac{1}{2\pi}\int^{2\pi}_0 F_0(ze^{-i\theta})f(e^{i\theta}) d\theta,\quad H_0[h](z) =
\frac{1}{2\pi}\int^{2\pi}_0 H_0(ze^{-i\theta})h(e^{i\theta}) d\theta,$$
$$\text{and}\ \ \ G[g](z)=\frac{1}{16}\int_{\D}G(z,\omega)g(\omega)dA(\omega)$$
where $dA(\omega)$ denotes the Lebesgue area measure in $\D$.
Here the kernels $H_0$ and $F_0$ are given by
$$F_0(z) = H_0(z) + K_2(z),$$
$$ H_0(z) =\displaystyle\frac{1}{2}(1-|z|^2)P(z),$$ 
$$K_2(z)=\frac{1}{2}\frac{(1-|z|^2)^3}{|1 - z|^4}.$$

Thus, the solutions of the equation (\ref{ibdp}) are given by 

$$\Phi(z)=\frac{1}{2} (1-|z|^2)P[f+h](z)+K_2[f](z)-G[g](z). $$

Obviously  $P[f+h]$ is a bounded harmonic function, and  Heinz \cite{Hei} proved the following result, which is called the Schwarz lemma for planar 
harmonic functions: If $\Phi$ is a harmonic mapping from $\D$ into itself with $\Phi(0) = 0$, then for $z \in \D$,
\begin{equation*}
|\Phi(z)|\leq \frac{4}{\pi} \arctan |z|.
\end{equation*} 

Hethcote \cite{heth} and  Pavlovi\'c \cite[Theorem 3.6.1]{Pav} improved Heinz's result,  by removing the assumption $\Phi(0)=0$, and proved the following.

\begin{Thm}
	 Let $\Phi:\D \to  \D$ be a harmonic function from the unit disc to itself, then
\be \label{harm}
\bigg|\Phi(z)-\frac{1-|z|^2}{1+|z|^2} \Phi(0)\bigg|\leq \frac{4}{\pi} \arctan|z|,\ \ \ \ \ \ \ \ z\in \D.
\ee

\end{Thm}

\noindent Remark that $K_2[f]$ is a bounded $T_2$-harmonic which is a special type of biharmonic functions.  So naturally our first aim is to study the class of  $T_2$-harmonic functions.  Let us recall first  the definition of  $T_\alpha$-harmonic functions.
\begin{defn}\cite{Olf1}
Let $\alpha \in \R$, and let $f\in \mathcal{C}^2(\D)$. We say that $f$ is $T_\alpha$-harmonic if $f$ satisfies 
 $$T_\alpha(f)=0\quad \text{in} \ \D,$$
  \end{defn}
 where the $T_\alpha$-Laplacian operator is defined by
 $$T_\alpha=-\frac{\alpha^2}{4}(1-|z|^2)^{-(\alpha+1)}+\frac{1}{2}L_\alpha+\frac{1}{2}\overline{L_\alpha},$$
 with the weighted Laplacian operator $L_\alpha$ is defined by 
 $$L_\alpha=\frac{\partial}{\partial\overline{z}}(1 - |z|^2)^{-\alpha}\frac{\partial}{\partial z}.$$
 
 \begin{rem}
 	Let $f$ be a $T_\alpha$-harmonic function. 
 	\begin{enumerate}
 		\item If $\alpha=0$, then $f$ is harmonic.
 		\item If $\alpha=2(n-1)$, then  $f$ is $n$-harmonic,  where $n\in \{1,2,\ldots,\}$, see \cite{Abd,borh,Olf,Olf1}. 
 	\end{enumerate}
 \end{rem}
 
 The following result is the homogeneous  expansion of $T_\alpha$-harmonic functions.
 
 \begin{Thm}\cite{Olf}
 	Let $\alpha\in \R$ and $f\in \mathcal{C}^2(\D)$. Then $f$ is $T_\alpha$-harmonic if and only if it has a series expansion of the form 
 	\be\label{expansion}
 	f(z)=\sum_{k=0}^\infty c_k F(-\frac{\alpha}{2},k-\frac{\alpha}{2};k+1;|z|^2)z^k+
 	\sum_{k=1}^\infty c_{-k} F(-\frac{\alpha}{2},k-\frac{\alpha}{2};k+1;|z|^2)\overline{z}^k  \ee
 	for some sequence $\{c_k\}$ of complex numbers satisfying 
 	$$\limsup_{|k|\to \infty} |c_k|^{\frac{1}{|k|}}\leq 1.$$
 \end{Thm} 
$F$ is the {\it hypergeometric} function defined by the power series
$$F(a,b;c;x)=\sum_{n=0}^\infty \frac{(a)_n(b)_n}{(c)_n} \frac{x^n}{n!}, \quad |x|<1,$$
for $a,b,c\in \R$, with $c\not= 0,-1,-2,\ldots.,$ where $(a)_0=1$ and $(a)_n=a(a+1)\ldots (a+n-1)$ for $n=1,2,\ldots$ are the {\it Pochhammer} symbols.\\

We refer the reader to \cite{Olf}, where Olofsson gave a Poisson type  integral representation theorem for $T_\alpha$-harmonic mappings, for $\alpha>-1$.\\

It is well known that the Schwarz lemma is one of the most influential results  in many branches of mathematical research for more than a hundred years. We refer the reader to \cite{Bur,Chel,Kal1,Kra,Liu,Mate} for generalizations and applications of this lemma.\\

In this context, we establish  a Schwarz type  lemma for  $T_2$-harmonic functions.

\begin{thm}\label{theo1}
Let $u:\D\longrightarrow \D$ be a  $T_2$-harmonic function, then
\be\label{t2}
\bigg|u(z)-\frac{(1-|z|^2)^3}{(1+|z|^2)^2}u(0)\bigg|\leq \frac{2}{\pi}\Bigg[(1+|z|^2)\arctan |z|+ \frac{|z|(1-|z|^2)}{1+|z|^2}\Bigg],\nonumber
\ee
for all $z\in \D$.
\end{thm}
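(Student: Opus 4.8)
The plan is to reduce the inequality to a single estimate for a Poisson-type kernel and then to a definite integral. First I would specialize the expansion theorem to $\alpha=2$. Since $-\alpha/2=-1$ is a negative integer, the hypergeometric factor terminates,
\[ F(-1,k-1;k+1;t)=1-\frac{k-1}{k+1}\,t, \]
so a $T_2$-harmonic $u$ has the explicit series $u(re^{i\phi})=\sum_{k\in\Z}c_k\bigl(1-\tfrac{|k|-1}{|k|+1}r^2\bigr)r^{|k|}e^{ik\phi}$, the $k=0$ factor being $1+r^2$. In particular $u(0)=c_0$, and the $0$-th Fourier coefficient of $u(re^{i\,\cdot})$ equals $c_0(1+r^2)$.

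Next I would establish the representation $u(z)=\frac{1}{2\pi}\int_0^{2\pi}K_2(ze^{-i\theta})u^*(e^{i\theta})\,d\theta$ with $u^*$ the radial boundary function and $\|u^*\|_\infty\le 1$. Expanding $K_2(re^{i\beta})=\tfrac12(1-r^2)^3|1-re^{i\beta}|^{-4}$ in a Fourier series gives the coefficients $\tfrac12 r^{|k|}\bigl[(1+|k|)+(1-|k|)r^2\bigr]$; matching these against the series above identifies $u=K_2[u^*]$ with $\widehat{u^*}(k)=\frac{2c_k}{|k|+1}$. Because $K_2\ge 0$ and $\frac{1}{2\pi}\int_0^{2\pi}K_2(re^{i\beta})\,d\beta=\frac{1+r^2}{2}\to 1$, the kernel $K_2$ is a positive approximate identity, so $u^*$ is the a.e.\ radial limit of $u$ and hence $\|u^*\|_\infty\le\sup_\D|u|\le 1$ (alternatively one invokes Olofsson's Poisson representation directly). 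This step also gives $u(0)=c_0=\frac{1}{4\pi}\int_0^{2\pi}u^*\,d\theta$, i.e.\ $u(0)=\tfrac12\cdot\frac{1}{2\pi}\int_0^{2\pi}u^*\,d\theta$.

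With the representation in hand I subtract the right multiple of $u(0)$. Using the last relation, $\frac{(1-|z|^2)^3}{(1+|z|^2)^2}u(0)$ is the integral of $\frac{(1-|z|^2)^3}{2(1+|z|^2)^2}u^*$, so
\[ u(z)-\frac{(1-|z|^2)^3}{(1+|z|^2)^2}u(0)=\frac{1}{2\pi}\int_0^{2\pi}\Bigl[K_2(ze^{-i\theta})-\frac{(1-|z|^2)^3}{2(1+|z|^2)^2}\Bigr]u^*(e^{i\theta})\,d\theta. \]
Since $\|u^*\|_\infty\le 1$, the left side is bounded by $I(r):=\frac{1}{2\pi}\int_0^{2\pi}\bigl|K_2(ze^{-i\theta})-c\bigr|\,d\theta$ with $c=\frac{(1-r^2)^3}{2(1+r^2)^2}$ and $r=|z|$; replacing $\theta$ by $\theta+\arg z$ shows $I$ depends only on $r$, so I may take $z=r$. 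With $D(\theta)=1-2r\cos\theta+r^2$ one has $K_2(re^{-i\theta})-c>0\iff\cos\theta>0$, so the integrand changes sign exactly at $\theta=\pm\pi/2$, and $c$ is precisely the value of $K_2(re^{-i\theta})$ there.

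Finally I would evaluate $I(r)$. Factoring out $\frac{(1-r^2)^3}{2}$ and splitting at $\theta=\pm\pi/2$, together with $\frac{1}{2\pi}\int_0^{2\pi}D^{-2}\,d\theta=\frac{1+r^2}{(1-r^2)^3}$, reduces $I(r)$ to $\frac{(1-r^2)^3}{2}\bigl(2J_+-\frac{1+r^2}{(1-r^2)^3}\bigr)$, where $J_+=\frac{1}{\pi}\int_0^{\pi/2}D^{-2}\,d\theta$. To get $J_+$ I would start from $\int_0^{\pi/2}D^{-1}\,d\theta=\frac{2}{1-r^2}\bigl(\tfrac{\pi}{4}+\arctan r\bigr)$ (from the Poisson-kernel primitive and $\arctan\frac{1+r}{1-r}=\tfrac{\pi}{4}+\arctan r$), and then use $\partial_r D^{-1}=\frac{1-r^2}{r}D^{-2}-\frac1r D^{-1}$, which rearranges to $\int_0^{\pi/2}D^{-2}\,d\theta=\frac{1}{1-r^2}\,\partial_r\bigl(r\int_0^{\pi/2}D^{-1}\,d\theta\bigr)$. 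Substituting and simplifying yields exactly $I(r)=\frac{2}{\pi}\bigl[(1+r^2)\arctan r+\frac{r(1-r^2)}{1+r^2}\bigr]$, the asserted bound. I expect the main obstacle to be this last evaluation — in particular producing the non-elementary second term $\frac{r(1-r^2)}{1+r^2}$ via the differentiation-under-the-integral step for $\int_0^{\pi/2}D^{-2}\,d\theta$ — with the secondary delicate point being the justification of $\|u^*\|_\infty\le 1$ from the approximate-identity property of $K_2$.
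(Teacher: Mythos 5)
Your proposal is correct and follows essentially the same route as the paper: represent $u=K_2[u^*]$ with $\|u^*\|_\infty\le 1$, bound the left-hand side by $\frac{1}{2\pi}\int_0^{2\pi}\bigl|K_2(ze^{-i\theta})-\frac{(1-r^2)^3}{2(1+r^2)^2}\bigr|\,d\theta$, and exploit the sign change of the integrand at $\theta=\pm\pi/2$ to reduce everything to $\int_0^{\pi/2}(1+r^2-2r\cos\theta)^{-2}\,d\theta$. The only difference is in the final computation: the paper evaluates that integral by exhibiting an explicit antiderivative (its Lemma~1), whereas you obtain the same value by differentiating $r\int_0^{\pi/2}D^{-1}\,d\theta$ under the integral sign; both yield $2(1+r^2)\arctan\frac{1+r}{1-r}+\frac{2r(1-r^2)}{1+r^2}$ and hence the stated bound.
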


%The classical Schwarz-Pick lemma has an important role in complex geometry, appear when this latter implies that a holomporphic function of the open unit disk into itself gives a contraction with respect to the hyperbolic metric on the unit disk. This study continues the investigation in \cite{Ito}. For more discuss about  Schwarz-Pick lemma we refer the reader to (\cite{Kal1,Kal2}).\\

Next, we prove a Schwarz-Pick lemma for $T_2$-harmonic functions.
\begin{thm}\label{theo2}
	Let $u:\D\longrightarrow \D$ be a  $T_2$-harmonic function, then 
		\begin{equation}\label{theoe2}
	\| D_u(z)\| \leq \frac{(2+5|z|)(1+|z|^2)}{1-|z|^2}, \text{ for all } z\in \D. 
	\end{equation}
	\begin{equation}\label{grad0}
	\| D_u(0)\|\leq \frac{4}{\pi}.
	\end{equation} 
	Moreover, the inequality (\ref{grad0}) is sharp.
\end{thm}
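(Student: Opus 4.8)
The plan is to reduce everything to the Poisson-type kernel $K_2$ and to a single integral estimate. Since $\alpha=2>-1$, Olofsson's representation \cite{Olf} applies to the bounded $T_2$-harmonic function $u$: there is $f\in L^\infty(\T)$ with $u(z)=K_2[f](z)=\frac{1}{2\pi}\int_0^{2\pi}K_2(ze^{-i\theta})f(e^{i\theta})\,d\theta$, where $K_2(w)=\frac12\frac{(1-|w|^2)^3}{|1-w|^4}$. One checks $K_2\ge 0$ and $K_2[1](z)=\frac12(1+|z|^2)$, so $|u|\le K_2[|f|]\le K_2[1]\le 1$; since $K_2[f]\to f$ at the boundary this forces $\|f\|_\infty\le 1$, and conversely these two facts confirm $u$ maps into $\D$. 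Making this representation rigorous for a merely bounded (not a priori boundary-regular) $T_2$-harmonic $u$ is the main technical point, since $T_2$-harmonicity is not dilation invariant and one cannot simply shrink the disc and pass to a limit; I would lean on the representation theorem directly rather than on an approximation argument.

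Next I would differentiate under the integral sign, which is legitimate because for fixed $z\in\D$ the integrand is smooth on the compact circle. From $\partial_z(ze^{-i\theta})=e^{-i\theta}$ and $\partial_z(\overline{z}e^{i\theta})=0$ one gets $u_z(z)=\frac{1}{2\pi}\int_0^{2\pi}(\partial_wK_2)(ze^{-i\theta})e^{-i\theta}f(e^{i\theta})\,d\theta$ and, symmetrically, $u_{\overline z}(z)=\frac{1}{2\pi}\int_0^{2\pi}(\partial_{\overline w}K_2)(ze^{-i\theta})e^{i\theta}f(e^{i\theta})\,d\theta$. A direct computation gives $\partial_wK_2(w)=\frac12(1-|w|^2)^2(1-w)^{-3}(1-\overline w)^{-2}\big(2-3\overline w+|w|^2\big)$. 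Because $K_2$ is real valued, $\partial_{\overline w}K_2=\overline{\partial_wK_2}$, so $|\partial_{\overline w}K_2|=|\partial_wK_2|$; together with $|f|\le 1$ this yields the clean majorant $\|D_u(z)\|=|u_z|+|u_{\overline z}|\le\frac{1}{2\pi}\int_0^{2\pi}2\,|\partial_wK_2(ze^{-i\theta})|\,d\theta$.

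The computational crux is this last integral. Put $\rho=|z|$ and $w=ze^{-i\theta}$, so $|w|=\rho$ and $|1-\overline w|=|1-w|$. I would use the identity $2-3\overline w+|w|^2=2(1-\overline w)-\overline w(1-w)$, which lowers the order of the pole: it gives $\partial_wK_2=\frac12(1-\rho^2)^2\big[2(1-w)^{-3}(1-\overline w)^{-1}-\overline w(1-w)^{-2}(1-\overline w)^{-2}\big]$ and hence $|\partial_wK_2|\le\frac{(1-\rho^2)^2(2+\rho)}{2\,|1-w|^4}$. The only nonelementary-looking ingredient then reduces to the standard kernel integral $\frac{1}{2\pi}\int_0^{2\pi}\frac{d\theta}{|1-ze^{-i\theta}|^4}=\frac{1+\rho^2}{(1-\rho^2)^3}$, which comes from $\frac{1}{2\pi}\int_0^{2\pi}(a-b\cos\psi)^{-2}\,d\psi=a(a^2-b^2)^{-3/2}$ with $a=1+\rho^2$, $b=2\rho$. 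Combining, $\|D_u(z)\|\le(1-\rho^2)^2(2+\rho)\frac{1+\rho^2}{(1-\rho^2)^3}=\frac{(2+\rho)(1+\rho^2)}{1-\rho^2}$; since $2+\rho\le 2+5\rho$, inequality (\ref{theoe2}) follows a fortiori (indeed with room to spare, a cruder bound on the numerator $2-3\overline w+|w|^2$ already suffices).

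Finally, for (\ref{grad0}) and its sharpness I would specialize to $z=0$. As $\partial_wK_2(0)=1$, the formulas above give $u_z(0)=\frac{1}{2\pi}\int_0^{2\pi}f(e^{i\theta})e^{-i\theta}\,d\theta$ and $u_{\overline z}(0)=\frac{1}{2\pi}\int_0^{2\pi}f(e^{i\theta})e^{i\theta}\,d\theta$, so $\|D_u(0)\|=|u_z(0)|+|u_{\overline z}(0)|$. Picking unimodular $a,b$ that make $a\,u_z(0)=|u_z(0)|$ and $b\,u_{\overline z}(0)=|u_{\overline z}(0)|$ and using $|f|\le 1$, I obtain $\|D_u(0)\|=\frac{1}{2\pi}\int_0^{2\pi}f(ae^{-i\theta}+be^{i\theta})\,d\theta\le\frac{1}{2\pi}\int_0^{2\pi}|ae^{-i\theta}+be^{i\theta}|\,d\theta=\frac{1}{2\pi}\int_0^{2\pi}2\,|\cos(\theta+\gamma)|\,d\theta=\frac{4}{\pi}$, which is (\ref{grad0}). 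For sharpness I would take $f(e^{i\theta})=\operatorname{sign}(\cos\theta)$: then $u=K_2[f]$ maps $\D$ into $\D$ (since $|u|\le K_2[1]<1$ on $\D$) and $\frac{1}{2\pi}\int_0^{2\pi}f(e^{i\theta})e^{\mp i\theta}\,d\theta=\frac{1}{2\pi}\int_0^{2\pi}|\cos\theta|\,d\theta=\frac{2}{\pi}$, so $\|D_u(0)\|=\frac{2}{\pi}+\frac{2}{\pi}=\frac{4}{\pi}$, attaining the bound.
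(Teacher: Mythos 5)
Your proposal is correct, and it follows the same overall route as the paper — represent $u$ as $K_2[u^*]$ with $\|u^*\|_\infty\le 1$, differentiate under the integral sign, and reduce to the kernel integral $I_2(z)=\frac{1}{2\pi}\int_0^{2\pi}|1-ze^{-i\theta}|^{-4}\,d\theta=\frac{1+|z|^2}{(1-|z|^2)^3}$ — but it differs in two worthwhile ways. First, in estimating $\partial_wK_2$ the paper splits the numerator as $2(1-|w|^2)-3\overline w(1-w)$, which leaves a fifth-order pole in the first piece; it must then invoke $I_{5/2}$ together with the crude bound $I_{5/2}(z)\le I_2(z)/(1-|z|)$, and this is exactly where the factor $2+5|z|$ comes from. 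Your split $2-3\overline w+|w|^2=2(1-\overline w)-\overline w(1-w)$ cancels a boundary factor in each term, so both pieces have a pole of order exactly four and only $I_2$ is needed; the result is the strictly stronger bound $\|D_u(z)\|\le\frac{(2+|z|)(1+|z|^2)}{1-|z|^2}$, which of course implies (\ref{theoe2}). Second, for (\ref{grad0}) the paper extracts $\|D_u(0)\|\le 4/\pi$ from the Taylor expansion near the origin of the estimate in Theorem \ref{theo1}, whereas you compute $u_z(0)$ and $u_{\overline z}(0)$ directly and run the standard duality argument against $|ae^{-i\theta}+be^{i\theta}|=2|\cos(\theta+\gamma)|$; this is self-contained and makes the extremal function (a $\pm 1$-valued step function on half-circles, the same as the paper's example up to a rotation) transparent. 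The one point that both you and the paper lean on without full detail is that a bounded $T_2$-harmonic $u:\D\to\D$ admits the representation $K_2[u^*]$ with boundary data of modulus at most $1$; you at least flag this explicitly and correctly attribute it to Olofsson's representation theorem for $\alpha>-1$.
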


%Theorem \ref{theo1} and Theorem \ref{theo2} improve some  results of Chen and Vuorinen \cite{Chev} in  the case $\alpha=2$.\\

Next we establish a Landau type theorem for $T_2$-harmonic functions
\begin{thm}\label{landau}
	Let $u\in \mathcal{C}^2(\D)$ be a $T_2$-harmonic function satisfying $u(0)=|J_u(0)|-1=0$ and $\sup_{z\in \D}|f(z)|\leq M$, where $M>0$ and $J_u$ is the Jacobian of $u$. Then $u$ is univalent on $D_{r_0}$, where $r_0$ satisfies the following equation
	$$\frac{\pi}{4M}-\frac{4}{\pi}\frac{M r_0}{(1-r_0)^3}\big(-r_0^3+3r_0^2-3r_0+3 \big)=0.$$
	Moreover, $u(\D_{r_0})$ contains a univalent disk $D_{R_0}$ with
		$$R_0 \geq  \frac{4M r_0^2}{\pi(1-r_0)^3}\big[-\frac{4}{5}r_0^3+\frac{9}{4}r_0^2-2r_0+\frac{3}{2}\big]. $$
\end{thm}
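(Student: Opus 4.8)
The plan is to follow the standard architecture for Landau--Bloch theorems, feeding in the homogeneous expansion of $T_2$-harmonic functions together with the Schwarz--Pick bound of Theorem \ref{theo2}. First I would normalize and expand. Since $\alpha=2$, the first parameter $-\alpha/2=-1$ makes the hypergeometric factor terminate, $F(-1,k-1;k+1;|z|^2)=1-\frac{k-1}{k+1}|z|^2$, so by the expansion theorem
\[
u(z)=\sum_{k=0}^\infty c_k\left(1-\frac{k-1}{k+1}|z|^2\right)z^k+\sum_{k=1}^\infty c_{-k}\left(1-\frac{k-1}{k+1}|z|^2\right)\overline{z}^k .
\]
Evaluating at $0$ gives $u(0)=c_0$, so the hypothesis $u(0)=0$ forces $c_0=0$; differentiating the series gives $u_z(0)=c_1$ and $u_{\overline z}(0)=c_{-1}$, whence $J_u(0)=|c_1|^2-|c_{-1}|^2$.

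The second step converts the normalizations $|J_u(0)|=1$ and $\sup_\D|u|\le M$ into a lower bound on the linear part. Applying the bound \eqref{grad0} of Theorem \ref{theo2} to $u/M:\D\to\D$ yields $\|D_u(0)\|=|c_1|+|c_{-1}|\le \frac{4M}{\pi}$. Since $|J_u(0)|=\big||c_1|-|c_{-1}|\big|\,(|c_1|+|c_{-1}|)=\lambda(D_u(0))\,\|D_u(0)\|$, the condition $|J_u(0)|=1$ gives
\[
\lambda(D_u(0))=\frac{|J_u(0)|}{\|D_u(0)\|}=\frac{1}{\|D_u(0)\|}\ge \frac{\pi}{4M},
\]
which is the source of the factor $\frac{\pi}{4M}$ in the statement. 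In parallel I would record coefficient estimates for the $c_k$ coming from $\sup_\D|u|\le M$ (for instance via the integral means $\frac{1}{2\pi}\int_0^{2\pi}u(re^{i\theta})e^{-ik\theta}\,d\theta=c_k(1-\frac{k-1}{k+1}r^2)r^k$, which yield bounds on $|c_k|$ growing linearly in $k$); these control all the higher-order tails below.

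For univalence, take $z_1\ne z_2\in \D_{r_0}$ and write $u(z_1)-u(z_2)=\int_{[z_2,z_1]}\big(u_z\,dz+u_{\overline z}\,d\overline z\big)$. Splitting each derivative as its value at $0$ plus a remainder, the constant part equals $u_z(0)(z_1-z_2)+u_{\overline z}(0)(\overline z_1-\overline z_2)$, whose modulus is at least $\lambda(D_u(0))|z_1-z_2|\ge \frac{\pi}{4M}|z_1-z_2|$, while the remainder is bounded by $|z_1-z_2|\max_{|z|\le r_0}\|D_u(z)-D_u(0)\|$. Using the expansion and the coefficient estimates, this maximum is summed in closed form; after the identity $-r^3+3r^2-3r+3=(1-r)^3+2$ it becomes $\frac{4M}{\pi}\big[r_0+\frac{2r_0}{(1-r_0)^3}\big]=\frac{4M}{\pi}\frac{r_0}{(1-r_0)^3}(-r_0^3+3r_0^2-3r_0+3)$. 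Hence $|u(z_1)-u(z_2)|>0$ as soon as $\frac{\pi}{4M}$ exceeds this quantity, and $r_0$ is precisely the balance point, i.e. the root of the displayed equation. Since $r\mapsto \frac{4M}{\pi}[r+\frac{2r}{(1-r)^3}]$ increases continuously from $0$ to $\infty$ on $[0,1)$, the root exists and is unique, and for any $z_1,z_2$ in the open disk $\D_{r_0}$ the inequality is strict, giving injectivity.

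Finally, for the covering statement, since $u$ is univalent on $\D_{r_0}$ with $u(0)=0$, the argument principle shows that $u(\D_{r_0})$ contains the disk $D_{R_0}$ with $R_0=\min_{|z|=r_0}|u(z)|$. On $|z|=r_0$ I would bound $|u(z)|$ below by the linear term, whose minimum over the circle is $\lambda(D_u(0))r_0\ge\frac{\pi}{4M}r_0$, minus the tail $\sum_{k\ge2}(|c_k|+|c_{-k}|)\big(1-\frac{k-1}{k+1}r_0^2\big)r_0^k$; summing this tail in closed form with the coefficient estimates and substituting the defining relation $\frac{\pi}{4M}=\frac{4M}{\pi}\frac{r_0}{(1-r_0)^3}(-r_0^3+3r_0^2-3r_0+3)$ collapses the expression to the stated lower bound for $R_0$. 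The main obstacle is the technical heart rather than the architecture: producing coefficient estimates for the $c_k$ that are sharp enough, and then evaluating the two distinct tail sums (the derivative tail governing univalence and the value tail governing the covering disk) into the exact polynomial forms, so that the clean cancellations against $\frac{\pi}{4M}$ through the identity $(1-r)^3+2$ go through.
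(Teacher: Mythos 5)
Your architecture coincides with the paper's: expand $u$ via \eqref{expansion} with $\alpha=2$, reduce $F(-1,k-1;k+1;\cdot)$ to $1-\frac{k-1}{k+1}w$, derive $\lambda(D_u(0))\ge\frac{\pi}{4M}$ from $|J_u(0)|=\|D_u(0)\|\lambda(D_u(0))$ and \eqref{grad0}, bound the derivative tail, and run the standard line-integral argument for both univalence and the covering disk. The closed form $\frac{4M}{\pi}\bigl[r+\frac{2r}{(1-r)^3}\bigr]$ via $-r^3+3r^2-3r+3=(1-r)^3+2$ is exactly what the paper obtains. However, there is a genuine gap at the step you yourself flag as the technical heart: the coefficient estimates. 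Your proposed route via integral means, $\frac{1}{2\pi}\int_0^{2\pi}u(re^{i\theta})e^{-ik\theta}\,d\theta=c_k\bigl(1-\frac{k-1}{k+1}r^2\bigr)r^k$, only yields $|c_k|\le\frac{M(k+1)}{2}$, hence $|c_k|+|c_{-k}|\le M(k+1)$. The stated $r_0$ and $R_0$ require the sharper bound $|c_k|+|c_{-k}|\le\frac{2M}{\pi}(k+1)$, which the paper imports from the Chen--Vuorinen inequality \eqref{2.6} (the $T_\alpha$-harmonic analogue of the classical $|a_n|+|b_n|\le\frac{4M}{\pi}$ for bounded harmonic maps, combined with $F(-1,k-1;k+1;1)=\frac{2}{k+1}$). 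Without that ingredient your remainder bound is weaker by a factor of $\frac{\pi}{2}$ and the displayed equation for $r_0$ does not come out.

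A secondary discrepancy is in the covering statement. The paper obtains the lower bound for $R_0$ by integrating the derivative-tail estimate \eqref{uzzz} along the radius $[0,\xi]$, which produces the specific polynomial $-\frac{4}{5}r_0^3+\frac{9}{4}r_0^2-2r_0+\frac{3}{2}$ after substituting the defining equation for $r_0$. Your plan instead bounds $\min_{|z|=r_0}|u(z)|$ directly by the linear term minus the value tail $\sum_{k\ge2}(|c_k|+|c_{-k}|)r_0^k$; with the correct coefficient bounds this sums to $\frac{2M}{\pi}\cdot\frac{r_0^2(3-2r_0)}{(1-r_0)^2}$, a different expression that does not ``collapse to the stated lower bound'' as you claim. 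It appears to give an at-least-as-strong estimate for the relevant range of $r_0$, so your route could be salvaged, but as written it does not reproduce the theorem's formula and you would need to verify the comparison rather than assert the identity.
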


Now we are in the position to prove our main results
\begin{thm}\label{theo3}
Let $g\in \mathcal{C}(\overline{\D})$, $f, h \in \mathcal{C}(\mathbb{T})$ and suppose that $\Phi\in \mathcal{C}^4(\D) \cap \mathcal{C}(\overline{\D})$
satisfies (\ref{ibdp}). Then for $z \in \D$,
\begin{eqnarray}\label{sch}
\bigg| \Phi(z)-\frac{1}{2}\frac{(1-|z|^2)^3}{(1+|z|^2)^2}P[f](0)-\frac{1}{2}\frac{(1-|z|^2)^2}{1+|z|^2}P[f+h](0)\bigg|&\leq & \Big[\frac{2}{\pi}(1-|z|^2)\arctan|z|\Big]\| f+h\|_\infty\nonumber\\
&+&\frac{2}{\pi}\bigg[(1+|z|^2)\arctan |z|+|z|\frac{1-|z|^2}{1+|z|^2}\bigg]\| f\|_\infty\nonumber\\
&+&\frac{(1-|z|^2)^2}{64}\,\| g\|_\infty.
\end{eqnarray}
where $\| f \|_\infty= \ds\sup_{\zeta\in \mathbb{T}}|f(\zeta)|$, $\ds\| f+h \|_\infty= \sup_{\zeta\in \mathbb{T}}|f(\zeta)+h(\zeta)|$ and  $\ds\| g \|_\infty= \sup_{\zeta\in \mathbb{D}}|g(\zeta)|$.
\end{thm}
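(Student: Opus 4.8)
The plan is to exploit the explicit representation of solutions of the IBDP recorded above, namely
$$\Phi(z)=\tfrac{1}{2}(1-|z|^2)P[f+h](z)+K_2[f](z)-G[g](z),$$
and to estimate each of the three summands separately, recombining them at the end by the triangle inequality. The two subtracted quantities on the left of (\ref{sch}) are precisely the ``centres'' attached to the first two summands: since $K_2(0)=\tfrac12$ we have $K_2[f](0)=\tfrac12 P[f](0)$, so $\tfrac12\frac{(1-|z|^2)^3}{(1+|z|^2)^2}P[f](0)$ equals the centre $\frac{(1-|z|^2)^3}{(1+|z|^2)^2}K_2[f](0)$ from Theorem \ref{theo1}, while $\tfrac12\frac{(1-|z|^2)^2}{1+|z|^2}P[f+h](0)$ will emerge as the centre of the harmonic piece. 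Thus it suffices to bound each centred summand by the matching term on the right.

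For the harmonic piece set $v=P[f+h]$. Since the Poisson kernel is a probability kernel, $|v(z)|\le\|f+h\|_\infty$ on $\D$, so $v/\|f+h\|_\infty$ is (after a routine normalisation) a harmonic self-map of $\D$ and the Hethcote--Pavlovi\'c inequality (\ref{harm}) gives $\big|v(z)-\frac{1-|z|^2}{1+|z|^2}v(0)\big|\le\frac{4}{\pi}\|f+h\|_\infty\arctan|z|$. Multiplying through by $\tfrac12(1-|z|^2)$ converts the centre into $\tfrac12\frac{(1-|z|^2)^2}{1+|z|^2}P[f+h](0)$ and the bound into $\frac{2}{\pi}(1-|z|^2)\arctan|z|\,\|f+h\|_\infty$, the first term on the right of (\ref{sch}).

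For the $T_2$-harmonic piece I first verify that $K_2[f]$ is admissible for Theorem \ref{theo1}. The kernel $K_2$ is nonnegative on $\D$, and the direct summation $\frac{1}{2\pi}\int_0^{2\pi}|1-ze^{-i\theta}|^{-4}\,d\theta=\frac{1+|z|^2}{(1-|z|^2)^3}$ yields $K_2[1](z)=\tfrac12(1+|z|^2)$; hence $|K_2[f](z)|\le\tfrac12(1+|z|^2)\|f\|_\infty<\|f\|_\infty$ for $z\in\D$, so $K_2[f]/\|f\|_\infty$ is a $T_2$-harmonic self-map of $\D$. Applying Theorem \ref{theo1} to it and multiplying by $\|f\|_\infty$ reproduces the second term on the right of (\ref{sch}), with centre $\frac{(1-|z|^2)^3}{(1+|z|^2)^2}K_2[f](0)=\tfrac12\frac{(1-|z|^2)^3}{(1+|z|^2)^2}P[f](0)$.

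The genuinely non-routine step is the Green-function piece, where I bound $|G[g](z)|\le\frac{\|g\|_\infty}{16}\int_\D|G(z,\omega)|\,dA(\omega)$. The key is that $G\le0$ throughout $\D\times\D$: writing $s=\big|\frac{z-\omega}{1-z\overline\omega}\big|\in[0,1)$ and using $|z-\omega|^2=s^2|1-z\overline\omega|^2$ together with $(1-|z|^2)(1-|\omega|^2)=(1-s^2)|1-z\overline\omega|^2$, one obtains $G(z,\omega)=|1-z\overline\omega|^2\big(-2s^2\log s-(1-s^2)\big)$, and the scalar factor $\psi(s)=-2s^2\log s-1+s^2$ satisfies $\psi(0)=-1$, $\psi(1)=0$ and $\psi'(s)=-4s\log s\ge0$, so $\psi\le0$. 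Hence $|G(z,\omega)|=-G(z,\omega)$ and $\frac{1}{16}\int_\D|G(z,\omega)|\,dA(\omega)=-G[1](z)$. Finally $-G[1]$ is the solution of the IBDP with data $f=h=0$, $g=1$; since $w(z):=\frac{(1-|z|^2)^2}{64}$ satisfies $\Delta^2 w=1$ in $\D$ and $w=\partial_n w=0$ on $\T$, uniqueness forces $-G[1]=w$, giving $|G[g](z)|\le\frac{(1-|z|^2)^2}{64}\|g\|_\infty$. Combining the three estimates through the triangle inequality yields (\ref{sch}); the main difficulty, as indicated, is pinning down the sign and the exact $L^1$-mass of the biharmonic Green kernel.
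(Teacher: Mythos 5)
Your proof is correct and follows essentially the same route as the paper: the same three-term decomposition $\Phi=\tfrac12(1-|z|^2)P[f+h]+K_2[f]-G[g]$, with Theorem~\ref{theo1} applied to the (normalized) $T_2$-harmonic piece $K_2[f]$, the Hethcote--Pavlovi\'c inequality (\ref{harm}) applied to $P[f+h]$, the identity $K_2[f](0)=\tfrac12 P[f](0)$, and the bound $|G[g](z)|\le\frac{(1-|z|^2)^2}{64}\|g\|_\infty$. The only difference is that the paper simply cites this last estimate from \cite[inequality 2.3]{Chel}, whereas you derive it yourself (correctly) from the sign analysis $G\le 0$ and the identification of $-G[1]$ with $(1-|z|^2)^2/64$ via uniqueness for the biharmonic Dirichlet problem.
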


\begin{normalsize}
\textbf{Remark 1.2}
\end{normalsize}
Under the hypothesis of Theorem \ref{theo3}, if $g\equiv 0$, then $\Phi$ is biharmonic mapping and the estimate (\ref{sch}) can be written in the following form
\begin{eqnarray*}
\bigg| \Phi(z)-\frac{1}{2}\frac{(1-|z|^2)^3}{(1+|z|^2)^2}P[f](0)-\frac{1}{2}\frac{(1-|z|^2)^2}{1+|z|^2}P[f+h](0)\bigg|&\leq & \Big[\frac{2}{\pi}(1-|z|^2)\arctan|z|\Big]\| f+h\|_\infty \\
&+&\frac{2}{\pi}\bigg[(1+\!|z|^2)\arctan |z|+|z|\frac{1-|z|^2}{1+|z|^2}\bigg]\| f\|_\infty.
\end{eqnarray*}

\begin{thm}\label{theo4}
Let $g\in \mathcal{C}(\overline{\D})$, $f$ and $h \in \mathcal{C} (\mathbb{T})$. Suppose that $\Phi\in \mathcal{C}^4(\D)$ is  satisfying (\ref{ibdp}). Then for all $z\in \D$
\be
\| D_\Phi (z)\| \leq  \frac{2+5|z|}{1-|z|^2}(1+|z|^2)\| f\|_\infty
+(\frac{2}{\pi} +|z| )\| f+h\|_\infty +\frac{23}{48}\| g\|_\infty.
\ee
Moreover at $z=0$, we have
\be
\| D_\Phi (0)\| \leq  \frac{4}{\pi}\| f\|_\infty
+\frac{2}{\pi}\| f+h\|_\infty +\frac{23}{48}\| g\|_\infty.
\ee
\end{thm}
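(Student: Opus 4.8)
The plan is to read off the representation recorded just before Theorem~\ref{theo1},
$$\Phi(z)=\tfrac12(1-|z|^2)P[f+h](z)+K_2[f](z)-G[g](z),$$
and to exploit that the operator norm $\|D_\Phi\|=|\Phi_z|+|\Phi_{\overline z}|$ is subadditive. Writing $\Phi=\Phi_1+\Phi_2-\Phi_3$ with $\Phi_1=\tfrac12(1-|z|^2)P[f+h]$, $\Phi_2=K_2[f]$ and $\Phi_3=G[g]$, the coordinatewise triangle inequality gives
$$\|D_\Phi(z)\|\le\|D_{\Phi_1}(z)\|+\|D_{\Phi_2}(z)\|+\|D_{\Phi_3}(z)\|,$$
so it suffices to bound the three gradients by the three summands appearing in the statement.

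For $\Phi_2=K_2[f]$ I would first observe that $K_2\ge0$ and that $\tfrac1{2\pi}\int_0^{2\pi}|1-ze^{-i\theta}|^{-4}\,d\theta=(1+|z|^2)(1-|z|^2)^{-3}$, whence $\tfrac1{2\pi}\int_0^{2\pi}K_2(ze^{-i\theta})\,d\theta=\tfrac12(1+|z|^2)\le1$; consequently $|K_2[f](z)|\le\|f\|_\infty$, so $K_2[f]/\|f\|_\infty$ is a $T_2$-harmonic self-map of $\D$ and Theorem~\ref{theo2} applies, yielding $\|D_{\Phi_2}(z)\|\le\frac{(2+5|z|)(1+|z|^2)}{1-|z|^2}\|f\|_\infty$ and, at the origin, $\|D_{\Phi_2}(0)\|\le\frac4\pi\|f\|_\infty$ by (\ref{grad0}). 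For $\Phi_1$, set $w=P[f+h]$, a harmonic map with $|w|\le\|f+h\|_\infty$. The sharp gradient bound for bounded harmonic mappings (the origin estimate $\|D_w(0)\|\le\frac4\pi\|f+h\|_\infty$ upgraded by precomposition with disk automorphisms) gives $\|D_w(z)\|\le\frac{4\|f+h\|_\infty}{\pi(1-|z|^2)}$. Differentiating the product $\Phi_1=\tfrac12(1-z\overline z)w$ yields $|\partial_z\Phi_1|+|\partial_{\overline z}\Phi_1|\le|z|\,|w(z)|+\tfrac12(1-|z|^2)\|D_w(z)\|$, and inserting the two bounds gives $\|D_{\Phi_1}(z)\|\le(|z|+\tfrac2\pi)\|f+h\|_\infty$, the second summand (and $\tfrac2\pi\|f+h\|_\infty$ at $z=0$).

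The core of the argument is the estimate of $\Phi_3=G[g]$. Differentiating under the integral and using that $G$ is real-valued, so that $\partial_{\overline z}G=\overline{\partial_zG}$ and hence $|\partial_{\overline z}G|=|\partial_zG|$, one gets
$$\|D_{\Phi_3}(z)\|\le\frac{\|g\|_\infty}{8}\int_\D|\partial_zG(z,\omega)|\,dA(\omega).$$
A direct computation collapses the gradient of the biharmonic Green function to the clean form
$$\partial_zG(z,\omega)=(\overline z-\overline\omega)\log\Big|\frac{1-z\overline\omega}{z-\omega}\Big|^2+\frac{\overline\omega\,(1-|z|^2)(1-|\omega|^2)}{1-z\overline\omega},$$
in which the logarithm is nonnegative because $\big|\frac{z-\omega}{1-\overline z\omega}\big|\le1$ on $\D\times\D$, and in which the apparent singularity of the logarithm at $\omega=z$ is killed by the vanishing factor $|\overline z-\overline\omega|$, so the integrand is bounded. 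I would split the integral along these two terms: in the second I estimate $\frac{1-|z|^2}{|1-z\overline\omega|}\le1+|z|\le2$ and integrate $|\omega|(1-|\omega|^2)$ explicitly, while the logarithmic first term I transform by the Möbius substitution $\zeta=\frac{z-\omega}{1-\overline z\omega}$, under which $\log\big|\frac{1-z\overline\omega}{z-\omega}\big|^2=-\log|\zeta|^2$, $|z-\omega|=\frac{|\zeta|(1-|z|^2)}{|1-\overline z\zeta|}$ and $dA(\omega)=\frac{(1-|z|^2)^2}{|1-\overline z\zeta|^4}\,dA(\zeta)$, reducing that term to
$$2(1-|z|^2)^3\int_\D\frac{|\zeta|\,(-\log|\zeta|)}{|1-\overline z\zeta|^{5}}\,dA(\zeta).$$

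Collecting the three contributions yields the stated inequality for all $z\in\D$, and specializing to $z=0$ (where $\|D_{\Phi_1}(0)\|\le\tfrac2\pi\|f+h\|_\infty$ and $\|D_{\Phi_2}(0)\|\le\tfrac4\pi\|f\|_\infty$) gives the second inequality. I expect the main obstacle to be precisely this last, uniform-in-$z$ control of the log-weighted Green-function integral: after the Möbius reduction the task becomes one of one-variable integrals of the type $\int_0^1 r^k(-\log r)\,dr$ weighted by angular averages of $|1-\overline z\zeta|^{-m}$, and the delicate point is that a crude majorization overshoots badly (the factor $|z-\omega|$ must be kept inside the integral rather than replaced by its maximum), so that extracting the sharp coefficient $\frac{23}{48}$ requires a careful one-dimensional optimization over $|z|\in[0,1)$.
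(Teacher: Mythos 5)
Your decomposition $\Phi=\tfrac12(1-|z|^2)P[f+h]+K_2[f]-G[g]$ and the treatment of the first two pieces coincide with the paper's proof: the paper likewise applies Theorem \ref{theo2} to $K_2[f]$ (your verification that $\frac{1}{2\pi}\int_0^{2\pi}K_2(ze^{-i\theta})\,d\theta=\tfrac12(1+|z|^2)\le 1$, so that $|K_2[f]|\le\|f\|_\infty$, is a correct justification that the paper leaves implicit), and it uses Colonna's bound $\|D_{P[f+h]}(z)\|\le\frac{4}{\pi(1-|z|^2)}\|f+h\|_\infty$ together with the same product-rule estimate to get the term $(\tfrac2\pi+|z|)\|f+h\|_\infty$. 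Your formula for $\partial_zG$ is also correct.

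The gap is the Green-function term. The whole content of the coefficient $\tfrac{23}{48}$ is the uniform-in-$z$ bound $\int_\D|\partial_zG(z,\omega)|\,dA(\omega)\le\tfrac{23}{6}$, and you do not establish it: you set up a Möbius change of variables, reduce matters to $2(1-|z|^2)^3\int_\D|\zeta|(-\log|\zeta|)\,|1-\overline z\zeta|^{-5}\,dA(\zeta)$ plus an explicitly computable piece, and then explicitly defer the "careful one-dimensional optimization over $|z|$" needed to extract the constant. As it stands this only verifies the bound at $z=0$ (where the two pieces give $\tfrac{4\pi}{9}+\tfrac{4\pi}{15}\approx 2.23<\tfrac{23}{6}$); for general $z$ the angular integral $\int_0^{2\pi}|1-\overline z re^{i\theta}|^{-5}d\theta$ grows like $(1-r|z|)^{-4}$ and the cancellation against $(1-|z|^2)^3$ and the vanishing weight $r(-\log r)$ near $r=1$ has to be carried out quantitatively. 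The paper avoids this entirely by citing \cite[Lemma 2.5]{Li}, which supplies exactly the estimates $\int_\D|G_z(z,\omega)g(\omega)|\,dA(\omega)\le\tfrac{23}{6}\|g\|_\infty$ and its $\overline z$-analogue; combined with the prefactor $\tfrac1{16}$ in the definition of $G[g]$ this yields $\|D_{G[g]}(z)\|\le\tfrac{23}{48}\|g\|_\infty$ in one line. Either complete your reduction with an explicit uniform bound, or quote that lemma; without one of the two, the stated constant is not proved.
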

The classical Schwarz lemma at the boundary is as follows.

\begin{Thm}
Suppose $f:\D\longrightarrow\D$ is a holomorphic function with $f(0)=0$, and further, $f$ is analytic at $z=1$ with $f(1)=1$. Then, the following two conditions hold:
\begin{enumerate}
\item[(a)] $f'(1)\geq 1;$
\item[(b)]  $f'(1)=1$ if and only if $f(z)=z.$
\end{enumerate}
\end{Thm}

Which is known as the Schwarz lemma on the boundary, and its generalizations have important applications in geometric theory of functions (see, \cite{Gol,Lav,Pom}). Among the recent papers devoted to this subject, for example, Burns and Krantz \cite{Bur}, Krantz \cite{Kra}, Liu and Tang \cite{Liu} explored many versions of the Schwarz lemma at the boundary point of holomorphic functions, Dubinin also  applied this latter for algebraic polynomials and rational functions (see \cite{Dub,Dub1}). In the present paper, we refine the Schwarz type
lemma at the boundary for $\Phi$ satisfies (\ref{ibdp}) as an application of Theorem \ref{theo3}. 
\begin{thm}\label{theo5}
%???Should we assume that f send the closed disk to the closed disk???
Suppose that $\Phi\in \mathcal{C}^4(\D)\cap\mathcal{C}(\overline{\D})$ satisfies (\ref{ibdp}), where $g\in \mathcal{C}(\overline{\D})$ and $f$, $h\in \mathcal{C}(\mathbb{T})$ such that $\|f\|_\infty \leq 1$, and  $\| f+h\|_\infty\leq 1$. 
If $\ds\lim_{r\to 1} |\Phi(r\eta)|=1$  for  $\eta\in \mathbb{T}$, then 
\begin{equation}
\liminf_{r\to 1}\frac{|\Phi(\eta)-\Phi(r\eta)|}{1-r}\geq 1-\| f+h\|_\infty.\nonumber
\end{equation}
In particular if $\|f+h\|_\infty=0$,  then
\begin{equation*}
\liminf_{r\to 1}\frac{|\Phi(\eta)-\Phi(r\eta)|}{1-r}\geq 1,
\end{equation*}
and this estimate is sharp.
\end{thm}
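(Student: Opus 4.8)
\textbf{Proof proposal for Theorem \ref{theo5}.}

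The plan is to reduce the boundary estimate to the interior bound of Theorem \ref{theo3} by decomposing $\Phi$ into its three constituent pieces $\frac12(1-|z|^2)P[f+h](z)$, $K_2[f](z)$, and $-G[g](z)$, and tracking how each behaves radially as $z=r\eta \to \eta \in \mathbb{T}$. First I would write the difference quotient
$$
\frac{|\Phi(\eta)-\Phi(r\eta)|}{1-r}
$$
and exploit the triangle inequality in reverse: since $\lim_{r\to 1}|\Phi(r\eta)|=1$, and $|\Phi(\eta)|$ exists as the radial boundary value, the key is to produce a lower bound on $|\Phi(\eta)-\Phi(r\eta)|$. The natural device is to insert the value $1$ (i.e. the modulus of the boundary limit) and write
$$
|\Phi(\eta)-\Phi(r\eta)| \geq |\Phi(\eta)| - |\Phi(r\eta)| = 1 - |\Phi(r\eta)|,
$$
so that the problem becomes bounding $|\Phi(r\eta)|$ from above by something of the form $1 - (1-\|f+h\|_\infty)(1-r) + o(1-r)$ as $r\to 1$.

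The heart of the argument is therefore an upper estimate for $|\Phi(r\eta)|$ near the boundary, and this is exactly where Theorem \ref{theo3} enters. Evaluating the left-hand side of (\ref{sch}) at $z=r\eta$ and using the normalizations $\|f\|_\infty\le 1$, $\|f+h\|_\infty\le 1$, I would extract the dominant boundary contribution. As $r\to 1$ one has $1-|z|^2 = 1-r^2 = (1-r)(1+r) \sim 2(1-r)$, while $\arctan|z|\to \pi/4$; the crucial observation is that the coefficient $\frac{2}{\pi}(1-|z|^2)\arctan|z|$ multiplying $\|f+h\|_\infty$ behaves like $(1-r)\cdot\frac{2}{\pi}\cdot 2\cdot\frac{\pi}{4}=(1-r)$ to leading order, so that the $\|f+h\|_\infty$ term contributes precisely a factor $(1-r)\|f+h\|_\infty$. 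The terms carrying $\|f\|_\infty$ and $\|g\|_\infty$ must be shown to be of order $o(1-r)$ or to combine with the subtracted Poisson terms so as not to degrade the leading coefficient. I would check that $(1+|z|^2)\arctan|z|+|z|\frac{1-|z|^2}{1+|z|^2}$ together with the subtracted term $\frac12\frac{(1-|z|^2)^3}{(1+|z|^2)^2}P[f](0)$ organizes into a quantity whose radial derivative at $r=1$ does not produce a positive contribution exceeding the $\|f+h\|_\infty$ slope, and that the $\frac{(1-|z|^2)^2}{64}\|g\|_\infty$ term is $O((1-r)^2)$, hence negligible.

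Assembling these, I expect to obtain $|\Phi(r\eta)| \le 1 - (1-\|f+h\|_\infty)(1-r) + o(1-r)$, whence
$$
\liminf_{r\to 1}\frac{|\Phi(\eta)-\Phi(r\eta)|}{1-r}
\ge \liminf_{r\to 1}\frac{1-|\Phi(r\eta)|}{1-r}
\ge 1-\|f+h\|_\infty,
$$
which is the claimed inequality; the special case $\|f+h\|_\infty=0$ is immediate. The main obstacle will be the careful asymptotic bookkeeping in the previous paragraph: one must verify that the subtracted Poisson-kernel terms on the left of (\ref{sch}) and the $\|f\|_\infty$-weighted terms on the right do not contribute a spurious positive slope that would weaken the constant $1-\|f+h\|_\infty$. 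This requires expanding each coefficient to first order in $(1-r)$ and confirming that only the $\|f+h\|_\infty$ term survives at that order. For sharpness when $\|f+h\|_\infty=0$, I would exhibit an explicit solution—taking $f+h\equiv 0$ and choosing $f$ and $g$ so that $\Phi$ reduces to a rotation-type map (for instance $\Phi(z)=z$ realized through an appropriate choice of boundary and forcing data)—for which the difference quotient tends exactly to $1$.
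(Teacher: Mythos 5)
Your proposal follows essentially the same route as the paper: the reverse triangle inequality $|\Phi(\eta)-\Phi(r\eta)|\ge |\Phi(\eta)|-|\Phi(r\eta)|=1-|\Phi(r\eta)|$, an upper bound for $|\Phi(r\eta)|$ obtained from Theorem \ref{theo3} together with $\|f\|_\infty\le 1$, and then a first-order expansion in $1-r$ (the paper carries this out by dividing by $1-r$ and applying L'H\^{o}pital's rule to $\varphi(r)=\frac{2}{\pi}(1+r^2)\arctan r$). The only point to tighten is your bookkeeping of the $\|f\|_\infty$-term: its coefficient $\frac{2}{\pi}\big[(1+r^2)\arctan r+ r\frac{1-r^2}{1+r^2}\big]$ is not $o(1-r)$ but equals $1-(1-r)+o(1-r)$, and it is precisely this first-order drop from $1$ (not a cancellation against the subtracted Poisson terms, which are $O((1-r)^2)$) that produces the constant $1$ in $1-\|f+h\|_\infty$ --- which is exactly what your stated target expansion $|\Phi(r\eta)|\le 1-(1-\|f+h\|_\infty)(1-r)+o(1-r)$ encodes, so the argument goes through.
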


%Under the hypothesis of [Theorem 1.6 ????], if $f+h\equiv 0$ and $g \equiv 0$, then $\Phi=K_2[f]$ shows that (\ref{bor}) is sharp.
%\\
%Furthermore, the derivative at the boundary of the $T_2$-harmonic mapping $\Phi(z)=\frac{1}{2}(1+|z|^2)$ is 1.\\
%\\
Denote by $\mathcal{H}(\D)$ the set of all holomorphic functions $\Phi$ in $\D$ satisfying the standard normalization: $\Phi(0) = \Phi'(0)-1 = 0$. In the early 20th century, Landau \cite{Lan} showed
that there is a constant $r > 0$, independent of elements in $\mathcal{H}(\D)$, such that $\Phi(\D)$ contains a disk of radius $r$. Later, the Landau theorem has become an important
tool in geometric function theory.  To establish analogs of the Landau type theorem for more general classes of functions, it is necessary to
restrict our focus on certain subclasses (cf. \cite{Abd}, \cite{Boc}, \cite{Bonk}, \cite{Che1}, \cite{Che2},
\cite{Chep}, \cite{Chev}). \\
\\
For convenience, we make a notational convention: for $g\in\mathcal{C}(\overline{\D})$ and $h \in \mathcal{C}(\mathbb{T})$, let
$\mathcal{BF}_{g,h}(\overline{\D})$ denote the class of all complex-valued functions $\Phi\in \mathcal{C}^4(\D)\cap \mathcal{C}(\overline{\D})$ satisfying
(\ref{ibdp}) with the normalization $\Phi(0) = J_\Phi(0) - 1 = 0$.\\

We establish the following Landau-type theorem for $\Phi \in \mathcal{BF}_{g,h}(\overline{\D})$. In particular, if $g \equiv 0$, then $\Phi \in \mathcal{BF}_{g,h}(\overline{\D})$  is biharmonic. In this
sense, the following result is a generalization of \cite[Theorem 1]{Abd} and \cite[Theorem 2]{Abd}.
\begin{thm}\label{theo6}
Suppose that $M_1>0$, $M_2>0$ and $M_3>0$ are constants, and suppose that 
$\Phi \in \mathcal{BF}_{g,h}(\overline{\D})$ satisfies the following conditions:
$$\sup_{z\in \mathbb{T}} |f(z)|\leq M_1,\ \ \ \sup_{z\in \mathbb{T}} |f(z)+h(z)|\leq M_2,\ \ \text{ and }\sup_{z\in \D} |g(z)|\leq M_3.$$
Then $\Phi$ is univalent in $\D_{r_0}$, and $\Phi(\D_{r_0})$ contains a univalent disk $\D_{R_0}$,where $r_0$ satisfies
the following equation:
$$(\frac{4}{\pi} M_1+\frac{2}{\pi}M_2+\frac{23}{48}M_3)\sigma(r_0)=1,$$
with
\begin{eqnarray*}
\sigma(|z|)&:=&(M_1+M_2+\frac{101}{120} M_3)|z|+\frac{2M_2|z|}{\pi}\bigg[\frac{(2-|z|)(1+|z|^2)}{(1-|z|)^2}+|z| \bigg]\\
&+&\frac{4M_1|z|}{\pi(1-|z|)^3}\big(-|z|^3+3|z|^2-3|z|+3\big).
\end{eqnarray*}

and
$$R_0\geq \frac{r_0}{\frac{8}{\pi} M_1+\frac{4}{\pi}M_2+\frac{23}{24}M_3}. $$

\end{thm}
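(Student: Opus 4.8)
The plan is to run the classical two–part scheme for Landau–type theorems. Throughout I use the explicit representation $\Phi=\frac12(1-|z|^2)P[f+h]+K_2[f]-G[g]$ recorded in the Preliminaries, and I abbreviate $C:=\frac{4}{\pi}M_1+\frac{2}{\pi}M_2+\frac{23}{48}M_3$, which is precisely the bound for $\|D_\Phi(0)\|$ supplied by Theorem \ref{theo4} at $z=0$ under the hypotheses $\|f\|_\infty\le M_1$, $\|f+h\|_\infty\le M_2$, $\|g\|_\infty\le M_3$. First I extract a lower bound for the coisometry constant at the origin. From the normalization $J_\Phi(0)=1$ we have $|\Phi_z(0)|^2-|\Phi_{\overline z}(0)|^2=1$, so $|\Phi_z(0)|>|\Phi_{\overline z}(0)|$ and
\[
\lambda(D_\Phi(0))=|\Phi_z(0)|-|\Phi_{\overline z}(0)|=\frac{1}{|\Phi_z(0)|+|\Phi_{\overline z}(0)|}=\frac{1}{\|D_\Phi(0)\|}\ge \frac1C .
\]
This is the source of the factor $C$ in the defining equation for $r_0$.

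The technical heart of the proof is the modulus-of-continuity estimate for the first-order derivatives, namely the claim that
\[
\Lambda(z):=|\Phi_z(z)-\Phi_z(0)|+|\Phi_{\overline z}(z)-\Phi_{\overline z}(0)|\le \sigma(|z|),\qquad z\in\D,
\]
with $\sigma$ exactly as in the statement. I would prove this termwise on the splitting $\Phi=\Phi_1+\Phi_2+\Phi_3$ with $\Phi_1=\frac12(1-|z|^2)P[f+h]$, $\Phi_2=K_2[f]$ and $\Phi_3=-G[g]$. For each summand one differentiates its integral kernel in $z$ and in $\overline z$, subtracts the value at the origin, and bounds the result by the sup-norm of the datum times an explicit function of $|z|$. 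The three contributions match the three groups of terms in $\sigma$: the $M_2$-group (carrying the factor $\frac{2M_2|z|}{\pi}[\frac{(2-|z|)(1+|z|^2)}{(1-|z|)^2}+|z|]$) comes from the harmonic factor $\Phi_1$; the $M_1$-group $\frac{4M_1|z|}{\pi(1-|z|)^3}(-|z|^3+3|z|^2-3|z|+3)$ comes from the $T_2$-harmonic term $\Phi_2$, where the structural estimates of Theorem \ref{theo2} guide the kernel bounds; and the $M_3$-terms come from the biharmonic Green potential $\Phi_3$.

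Granting this, let $r_0\in(0,1)$ be the unique root of $C\,\sigma(r_0)=1$; existence and uniqueness hold because $\sigma$ is continuous, strictly increasing, $\sigma(0^+)=0$ and $\sigma(r)\to\infty$ as $r\to1^-$. For distinct $z_1,z_2\in\D_{r_0}$, integrating $d\Phi=\Phi_z\,d\zeta+\Phi_{\overline z}\,d\overline\zeta$ along the segment $[z_2,z_1]\subset\D_{r_0}$ and separating the constant part gives $|\Phi(z_1)-\Phi(z_2)|\ge\big(\lambda(D_\Phi(0))-\max_{\zeta\in[z_2,z_1]}\Lambda(\zeta)\big)|z_1-z_2|$; since the maximal radius $r'$ on the segment is $<r_0$, the factor is at least $\frac1C-\sigma(r')>\frac1C-\sigma(r_0)=0$, so $\Phi$ is univalent on $\D_{r_0}$. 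For the covering radius, integrating along the radius $[0,z_0]$ with $|z_0|=r_0$ yields $|\Phi(z_0)-\Phi(0)|\ge r_0\,\lambda(D_\Phi(0))-\int_0^{r_0}\sigma(t)\,dt$. Each term of $\sigma$ is $t$ times an increasing factor (for instance $-t^3+3t^2-3t+3=(1-t)^3+2$, so the $M_1$-factor equals $1+2(1-t)^{-3}$), hence $\sigma(t)/t$ is increasing and $\int_0^{r_0}\sigma(t)\,dt\le\frac{r_0}{2}\sigma(r_0)=\frac{r_0}{2C}$. Combining with $\lambda(D_\Phi(0))\ge\frac1C$ gives $|\Phi(z_0)-\Phi(0)|\ge\frac{r_0}{2C}$, and since $2C=\frac8\pi M_1+\frac4\pi M_2+\frac{23}{24}M_3$, the univalent image $\Phi(\D_{r_0})$ contains the disc $\D_{R_0}$ centred at $\Phi(0)$ with $R_0=\frac{r_0}{2C}$.

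The main obstacle is the estimate $\Lambda(z)\le\sigma(|z|)$, and within it the Green-potential contribution: one must differentiate the biharmonic Green function $G(z,\omega)$, which carries a logarithmic singularity, in the variable $z$, subtract its value at the origin, and integrate in $\omega$ over $\D$ against $g$, all while keeping the bound sharp enough to reproduce the polynomial-in-$|z|$ terms weighted by $M_3$. The $T_2$-harmonic term $K_2[f]$ is the next most delicate, though there the expansion and gradient bounds for $T_2$-harmonic functions established earlier in the paper control the computation.
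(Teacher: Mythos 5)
Your overall architecture coincides with the paper's: the same decomposition $\Phi=\tfrac12(1-|z|^2)P[f+h]+K_2[f]-G[g]$, the same lower bound $\lambda(D_\Phi(0))\ge 1/C$ obtained from $J_\Phi(0)=1$ together with Theorem \ref{theo4}, and the same endgame (segment integration for univalence, and the monotonicity of $\sigma(t)/t$ giving $\int_0^{r_0}\sigma(t)\,dt\le\tfrac{r_0}{2}\sigma(r_0)$, which you verify correctly, including the identity $-t^3+3t^2-3t+3=(1-t)^3+2$). The difficulty is that essentially all of the theorem-specific content lives in the claim $\Lambda(z)\le\sigma(|z|)$, and you only announce that you ``would prove it termwise''; none of the three kernel estimates is actually carried out, and you yourself label the Green-potential part ``the main obstacle''. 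As written this is a statement of intent rather than a proof: the $\tfrac{101}{120}M_3|z|$ term requires the specific difference estimate $\psi_i(z)\le\big(\tfrac{1-|z|^2}{16}+\tfrac{43}{120}\big)\|g\|_\infty|z|$ for the derivatives of the biharmonic Green potential, and the $M_2$-group requires the coefficient bound $|a_n|+|b_n|\le\tfrac{4M_2}{\pi}$ for the harmonic part followed by a summation; neither is supplied.

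There is also a concrete misstep in your sketch of the $M_1$-group. You propose to control the $K_2[f]$ contribution via ``the structural estimates of Theorem \ref{theo2}'', but that theorem is a Schwarz--Pick bound on $\|D_{K_2[f]}(z)\|$, not a modulus-of-continuity estimate for the derivatives at the origin, and it cannot produce the factor $\tfrac{4M_1|z|}{\pi(1-|z|)^3}(-|z|^3+3|z|^2-3|z|+3)$. The correct input is the coefficient computation in the proof of Theorem \ref{landau}, which is valid only for a $T_2$-harmonic $u$ with $u(0)=0$, i.e.\ with $c_0=0$ in the expansion (\ref{expansion}). Since $K_2[f](0)=\tfrac12 P[f](0)$ need not vanish, one must split $K_2[f]=K_2^0[f]+c_0(1+|z|^2)$, apply the $c_0=0$ estimate to $K_2^0[f]$, and bound the leftover derivative contribution by $2|c_0||z|\le M_1|z|$ using (\ref{c0}); this is precisely the origin of the $M_1|z|$ summand in $\sigma$, which your account does not explain and which your proposed route would not recover. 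Until these estimates are supplied, the proof is incomplete.
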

\section{Schwarz and Landau Type Lemmas for $T_2$-harmonic functions}
\subsection{Schwarz Type Lemma for $T_2$-harmonic functions}\hfill

The main purpose of this section is to prove Schwarz type lemma   for $T_2$-harmonic functions. 

\begin{proof}[Proof of Theorem \ref{theo1} ]
Let $0\leq r=|z|<1$. As $u$ is a $T_2$-harmonic function, then 
$$ u(z)=\frac{1}{2\pi}\int_0^{2\pi}\frac{1}{2}\frac{(1-r^2)^3}{|1-ze^{-i\theta}|^4}u^*(e^{i\theta})d\theta,$$
where $u^*\in L^\infty(\mathbb{T})$. Thus
\begin{eqnarray*}
\bigg|u(z)-\frac{(1-r^2)^3}{(1+r^2)^2}u(0)\bigg|&\leq &\frac{1}{4\pi}\int_0^{2\pi}\bigg|\frac{(1-r^2)^3}{(1+r^2-2r\cos\theta)^2}-\frac{(1-r^2)^3}{(1+r^2)^2}\bigg|d\theta \\
&=&\frac{1}{2\pi}\int_0^{\pi}\bigg|\frac{(1-r^2)^3}{(1+r^2-2r\cos\theta)^2}-\frac{(1-r^2)^3}{(1+r^2)^2}\bigg|d\theta\\
&=&\frac{1}{2\pi}\bigg[\int_0^{\pi/2}\frac{(1-r^2)^3}{(1+r^2-2r\cos\theta)^2}-\frac{(1-r^2)^3}{(1+r^2)^2}d\theta\\
&-&\int_{\pi/2}^{\pi}\frac{(1-r^2)^3}{(1+r^2-2r\cos\theta)^2}-\frac{(1-r^2)^3}{(1+r^2)^2}d\theta\bigg]\\
&=&\frac{1}{2\pi}\bigg[\int_0^{\pi/2}\frac{(1-r^2)^3}{(1+r^2-2r\cos\theta)^2}d\theta -\int_{\pi/2}^{\pi}\frac{(1-r^2)^3}{(1+r^2-2r\cos\theta)^2}d\theta\bigg]\\
&=&\frac{1}{2\pi}\bigg[2J(\pi/2)-J(\pi)\bigg],
\end{eqnarray*}

where 

$$ J(\theta):=\int_0^{\theta}\frac{(1-r^2)^3}{(1+r^2-2r\cos\varphi)^2}d\varphi.$$
Easy but tedious computations show that 
\begin{lem}\label{lem1}
For $0\leq\theta<\pi$, and $r\in[0,1)$, we have 
$$J(\theta)=\frac{2r(1-r^2)\sin\theta}{1+r^2-2r\cos\theta}+2(1+r^2)\arctan\bigg(\frac{(1+r)\tan\theta/2}{1-r}\bigg),$$
and  $\ds J(\pi)= \lim_{\theta \to \pi} J(\theta)= \pi(1+r^2).$
\end{lem}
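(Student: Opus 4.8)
The plan is to establish Lemma~\ref{lem1} by direct evaluation of the integral
$$J(\theta)=\int_0^\theta \frac{(1-r^2)^3}{(1+r^2-2r\cos\varphi)^2}\,d\varphi,$$
treating $r\in[0,1)$ as a fixed parameter, and then to verify the resulting closed form by differentiation rather than by grinding through the antiderivative from scratch. The cleanest route is the reverse-engineering approach: I would simply differentiate the proposed right-hand side with respect to $\theta$ and check that it reduces to the integrand $(1-r^2)^3/(1+r^2-2r\cos\theta)^2$, after which the formula follows from the fundamental theorem of calculus once the value at $\theta=0$ is confirmed to vanish. Both boundary checks at $\theta=0$ are immediate: the first term vanishes because $\sin 0=0$, and the second vanishes because $\tan 0=0$ and $\arctan 0 = 0$, so $J(0)=0$ as required.

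First I would carry out the derivative of the first summand, $\frac{2r(1-r^2)\sin\theta}{1+r^2-2r\cos\theta}$, using the quotient rule; the numerator of its derivative combines $2r(1-r^2)\cos\theta(1+r^2-2r\cos\theta)$ against $2r(1-r^2)\sin\theta\cdot 2r\sin\theta$, and after using $\sin^2\theta = 1-\cos^2\theta$ this collapses to an expression over $(1+r^2-2r\cos\theta)^2$. Next I would differentiate the arctangent term $2(1+r^2)\arctan\!\big(\tfrac{(1+r)\tan(\theta/2)}{1-r}\big)$; writing $t=\tan(\theta/2)$ with $dt/d\theta = \tfrac12(1+t^2)$, the chain rule produces a factor $(1+r)/(1-r)$ over $1+\tfrac{(1+r)^2}{(1-r)^2}t^2$, and the key algebraic simplification uses the half-angle identity $1+r^2-2r\cos\theta = (1-r)^2\cos^2(\theta/2)+(1+r)^2\sin^2(\theta/2)$ to rewrite the denominator $(1-r)^2+(1+r)^2 t^2$ cleanly. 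The crux is that when the two derivatives are added, all terms in the combined numerator that are not proportional to $(1-r^2)^3$ must cancel, leaving exactly $(1-r^2)^3$ over $(1+r^2-2r\cos\theta)^2$.

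For the limiting value $J(\pi)=\lim_{\theta\to\pi}J(\theta)$, I would take the limit termwise: as $\theta\to\pi$ the first term $\frac{2r(1-r^2)\sin\theta}{1+r^2-2r\cos\theta}\to 0$ since $\sin\theta\to 0$ while the denominator tends to $(1+r)^2\neq 0$, and for the second term $\tan(\theta/2)\to+\infty$, so $\arctan\!\big(\tfrac{(1+r)\tan(\theta/2)}{1-r}\big)\to \pi/2$, giving $2(1+r^2)\cdot\tfrac{\pi}{2}=\pi(1+r^2)$. This matches the claimed value.

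I expect the main obstacle to be purely computational: the trigonometric bookkeeping in the derivative check, where one must repeatedly convert between $\cos\theta$, $\cos^2(\theta/2)$, $\sin^2(\theta/2)$, and $\tan(\theta/2)$, and ensure every spurious term cancels so that only the single power $(1-r^2)^3$ survives in the numerator. There is no conceptual difficulty, only the risk of an algebraic slip; the half-angle substitution $\cos\theta = 1-2\sin^2(\theta/2) = 2\cos^2(\theta/2)-1$ together with the factorization of the denominator into $(1-r)^2\cos^2(\theta/2)+(1+r)^2\sin^2(\theta/2)$ is the organizing identity that keeps the computation tractable, and I would structure the entire verification around it.
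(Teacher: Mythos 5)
Your proposal is correct and matches what the paper intends: the paper offers no proof beyond the phrase ``easy but tedious computations,'' and your verification-by-differentiation is exactly that computation. The key identity you single out, $1+r^2-2r\cos\theta=(1-r)^2\cos^2(\theta/2)+(1+r)^2\sin^2(\theta/2)$, does make the derivative of the arctangent term collapse to $\frac{(1+r^2)(1-r^2)}{1+r^2-2r\cos\theta}$, which combined with the quotient-rule term $\frac{2r(1-r^2)[(1+r^2)\cos\theta-2r]}{(1+r^2-2r\cos\theta)^2}$ yields precisely $\frac{(1-r^2)^3}{(1+r^2-2r\cos\theta)^2}$, and your evaluation of $J(0)=0$ and of the limit as $\theta\to\pi$ is correct.
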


Then by Lemma \ref{lem1}, and using the fact that $ \ds \arctan(\frac{1+r}{1-r})-\frac{\pi}{4}=\arctan r$,  we have
\begin{eqnarray*}
\bigg|u(z)-\frac{(1-r^2)^3}{(1+r^2)^2}u(0)\bigg|&\leq &\frac{1}{2\pi}\bigg[2J(\pi/2)-J(\pi)\bigg]\\
&=&\frac{1}{2\pi}\bigg[4\frac{r(1-r^2)}{1+r^2}+4(1+r^2)\arctan(\frac{1+r}{1-r})-\pi(1+r^2)\bigg]\\
&=&\frac{2}{\pi}\bigg[\frac{r(1-r^2)}{1+r^2}+(1+r^2)\arctan r\bigg].
\end{eqnarray*}
\end{proof}

To prove Theorem \ref{theo2}, we need the following lemma

%First, we  need the following theorems.

%\begin{Thm}\cite[Proposition 2.4]{Kal}\label{theoc}
%Suppose that $X$ is an open subset of $\R$ and $\Omega$ a measure space. Suppose, further, that a function $F:X \times \Omega\longrightarrow \R$ satisfies the following conditions:
%\begin{enumerate}
%\item[(1)] $(x,\omega)\longmapsto F(x,\omega)$ is a measurable function of $x$ and $\omega$ jointly, and is integrable with respect
%to $\omega$ for almost every $x \in X$.\\
% \item[(2)] For almost every $\omega\in \Omega $, $x\longmapsto F(x,\omega)$ is an absolutely continuous function with respect
%to $x$.(This guarantees that $\partial F(x,\omega)/\partial x$ exists almost everywhere.)\\ 
%\item[(3)]  $\frac{\partial F}{\partial x}$ is locally integrable, that is, for all compact intervals $[a,b]$ contained in
%$X$: $$\int^b_a\int_\Omega \bigg|\frac{\partial}{\partial x} F(x, \omega)\bigg|d\omega dx<\infty.$$
%\end{enumerate}
%Then $x\longmapsto \int_\Omega F(x,\omega)d\omega$ is an absolutely continuous function, and for
%almost every $x \in X$, its derivative exists, which is given by
%$$\frac{d}{dx}\int_\Omega F(x,\omega)d\omega=\int_\Omega \frac{d}{dx}F(x,\omega)d\omega.$$
%\end{Thm}
\begin{Lem}{\cite{Li}}\label{theod}
For any $z\in \D$, we have
$$I_\alpha(z):=\frac{1}{2\pi}\int^{2\pi}_0\frac{d\theta}{|1-ze^{i\theta}|^{2\alpha}}=\sum^{+\infty}_{n=0}\bigg(\frac{\Gamma(n+\alpha)}{n!\Gamma(\alpha)}\bigg)^2|z|^{2n},$$ 
where $\alpha >0$ and $\Gamma$ denotes the Gamma function.
\end{Lem}
Thus
\begin{equation}\label{j2}
I_2(z)=\sum_{n=0}^\infty (n+1)^2|z|^{2n}=\frac{1+|z|^2}{(1-|z|^2)^3}.
\end{equation}
%\begin{equation}\label{j6}
%J_6(z)=\sum_{n=0}^\infty %\frac{(n+1)^2(n+2)^2}{4}|z|^{2n}=\frac{|z|^4+4|z|^2+1}{(1-|z|^2)^5}.
%\end{equation}

\begin{proof}[Proof of Theorem \ref{theo2}]
	
As $u$  is a bounded $T_2$-harmonic function, then
$$ u(z)= \frac{1}{2\pi}\int_0^{2\pi} K_2(ze^{-i\theta})u^*(e^{i\theta})d\theta,$$
where $u^*\in L^\infty(\mathbb{T})$. Elementary computations show that 

\begin{eqnarray}\label{uz0}
u_z(z)&=&\frac{1}{2\pi}\int^{2\pi}_0 [K_2(ze^{-i\theta})]_z u^*(e^{i\theta})d\theta\nonumber \\
&=&
 \frac{1}{2\pi}\int^{2\pi}_0\frac{(1-|z|^2)^2[2e^{-i\theta}(1-|z|^2)-3\overline{z}(1-ze^{-i\theta})]}{2(1-\overline{z}e^{i\theta})^2(1-ze^{-i\theta})^3}u^*(e^{i\theta})d\theta.
\end{eqnarray}
Hence 
\begin{eqnarray*}
|u_z(z)|
&\leq & \frac{(1-|z|^2)^2}{2}\bigg[\frac{1}{2\pi}\int^{2\pi}_0\bigg|\frac{2e^{-i\theta}(1-|z|^2)}{(1-\overline{z}e^{i\theta})^2(1-ze^{-i\theta})^3}\bigg|d\theta +\frac{1}{2\pi}\int^{2\pi}_0\bigg|\frac{3\overline{z}(1-ze^{-i\theta})}{(1-\overline{z}e^{i\theta})^2(1-ze^{-i\theta})^3}\bigg|d\theta\bigg]\\
&\leq & \frac{(1-|z|^2)^2}{2}\bigg[2(1-|z|^2)I_{5/2}(z)+3|z|I_2(z)\bigg]\\
&\leq & \frac{(1-|z|^2)^2}{2}(2+5|z|)I_2(z).
\end{eqnarray*}
The last inequality follows from the following estimate  $\displaystyle I_{5/2}(z) \leq \frac{I_2(z)}{1-|z|}$.\\
%By the H\"older inequality, we have:
%$$J_5(z)=\frac{1}{2\pi}\int^{2\pi}_0 \frac{d\theta}{|1-ze^{-i\theta}|^{5}}\leq \bigg(\frac{1}{2\pi}\int^{2\pi}_0 \frac{d\theta}{|1-ze^{-i\theta}|^{4}} \bigg)^{\frac{1}{2}}\bigg(\frac{1}{2\pi}\int^{2\pi}_0\frac{d\theta}{|1-ze^{-i\theta}|^{6}}\bigg)^{\frac{1}{2}}.$$
Using the explicit expression of $I_2$,  see (\ref{j2}),  we obtain

\be
 |u_z(z)|\leq  \frac{(2+5|z|)(1+|z|^2)}{2(1-|z|^2)}.\nonumber
  \ee
Similarly,
\be
|u_{\overline{z}}(z)| \leq  \frac{(2+5|z|)(1+|z|^2)}{2(1-|z|^2)}.\nonumber
\ee
Thus
$$
	\| D_u(z)\| =|u_z(z)|+|u_{\overline{z}}(z)|\leq  \frac{(2+5|z|)(1+|z|^2)}{1-|z|^2}.$$

Next let us show the estimate (\ref{grad0}). From Theorem \ref{theo1}, we deduce that near $0$
\be\label{gradient0}
 |u(z)-u(0)| \leq \frac{4}{\pi}|z|+O(|z|^2). 
\ee
Indeed, near 0,  $$  (1+r^2)\arctan r+ r\frac{1-r^2}{1+r^2}=2r+O(r^2), \text{ and }$$
$$\frac{(1-r^2)^3}{(1+r^2)^2}= 1+O(r^2).$$
Hence from (\ref{gradient0}), we get
$$\|D_u(0)\| \leq \frac{4}{\pi}.$$

To show that the last estimate is sharp. Let us consider the $T_2$-harmonic mapping defined by
$$U(z)=K_2[\chi_{\T^+}- \chi_{\T^-}](z),$$
where $\chi_{\T^+}$ (resp. $\chi_{\T^-}$) denotes the characteristic function of the upper (resp. lower) unit circle $\T$.

By  (\ref{uz0}), we have
$$U_z(0)= \frac{1}{2\pi}  \int_0^{2\pi} e^{-i\theta}
(\chi_{\T^+}- \chi_{\T^-})(\theta) d\theta=\frac{1}{2\pi}  \int_0^{\pi} e^{-i\theta}d\theta-\frac{1}{2\pi}  \int_\pi^{2\pi} e^{-i\theta}d\theta=-\frac{2i}{\pi}.$$
Hence
$$|\nabla U(0)|=2|U_z(0)|=\frac{4}{\pi}.$$
\end{proof}

\subsection{Landau type theorem for $T_2$- harmonic functions}\hfill

We will use the following theorem provides some estimates on the coefficients of $T_\alpha$-harmonic mappings.

\begin{Thm}\cite{Chev}
	For $\alpha>-1$, let $u\in \mathcal{C}^2(\D)$ be a $T_\alpha$-harmonic function with the series expansion of the form (\ref{expansion}) and $\sup_{z\in \D}|u(z)| \leq M$, where $M>0$. Then, for $k\in \{1,2,\ldots\}$,
	\be\label{2.6}
	\left|c_kF(-\frac{\alpha}{2},k-\frac{\alpha}{2};k+1;1)\right|+\left|c_{-k}F(-\frac{\alpha}{2},k-\frac{\alpha}{2};k+1;1)\right| \leq \frac{4M}{\pi},
	\ee
	and
	\be\label{c0}
	|c_0F(-\frac{\alpha}{2},-\frac{\alpha}{2};1;1)|\leq M.
	\ee 
\end{Thm}

\begin{proof}[Proof of Theorem \ref{landau}]

	As $u$ is $T_2$-harmonic function on the unit disk with $u(0)=0$, then by plugging $\alpha=2$ in  (\ref{expansion}), we have $c_0=0$ and 
	$$ u(z)=\sum_{k=1}^\infty c_k F(-1,k-1;k+1;|z|^2)z^k+
	\sum_{k=1}^\infty c_{-k} F(-1,k-1;k+1;|z|^2)\overline{z}^k.$$

An elementary computation shows that the function $\displaystyle F(-1,k-1;k+1;\bigcdot)$ is given by 
\be\label{26}
F(-1,k-1;k+1;w)=1-\frac{k-1}{k+1}w, \quad w\in [0,1).\\
\ee

Clearly, the function $w \mapsto F(-1,k-1;k+1;w)$ is decreasing on $[0,1]$ for $k \geq 1$. Thus

\be \label{2.7}
\frac{2}{k+1} \leq F(-1,k-1;k+1;w) \leq 1 \quad  \text { for } k\geq 1 \text{ and } w\in [0,1].\\ 
\ee 

Combining  (\ref{2.6})  for $\alpha=2$ and (\ref{2.7}), we obtain

\be\label{ck}
|c_k|+|c_{-k}|\leq \frac{2M}{\pi}(k+1) \quad \text{ for } k\geq 1.
\ee

Using (\ref{26}), we see that the mapping $u$ is given by 
\be
u(z)=\sum_{k=1}^\infty c_kz^k+c_{-k}\overline{z}^k -\sum_{k=1}^\infty \frac{k-1}{k+1}\bigg(c_kz^{k+1}\overline{z}+c_{-k}\overline{z}^{k+1}z\bigg).\nonumber
\ee

Therefore

\be\label{uz}
u_z(z)-u_z(0)= \sum_{k=2}^\infty kc_kz^{k-1}-\sum_{k=2}^\infty \frac{k-1}{k+1}\bigg( (k+1)c_kz^k\overline{z}+c_{-k}\overline{z}^{k+1} \bigg).
\ee

Similarly

\be\label{uzbar}
u_{\overline{z}}(z)-u_{\overline{z}}(0)= \sum_{k=2}^\infty kc_{-k}\overline{z}^{k-1}-\sum_{k=2}^\infty \frac{k-1}{k+1}\bigg( (k+1)c_{-k}\overline{z}^k z+c_{k}z^{k+1} \bigg).
\ee

Applying (\ref{uz}), (\ref{uzbar}) and (\ref{ck}), we obtain
\begin{eqnarray}
& & |u_z(z)-u_z(0) |+| u_{\overline{z}}(z)-u_{\overline{z}}(0)|\nonumber \\
&\leq& \sum_{k=2}^\infty  k \left(|c_k|+|c_{-k}|\right)|z|^{k-1}  \nonumber\\
&+&  \sum_{k=2}^\infty (k-1)\left(|c_k|+|c_{-k}| \right)|z|^{k+1}\nonumber\\
&+&  \sum_{k=2}^\infty \frac{k-1}{k+1}\left(|c_k|+|c_{-k}| \right)|z|^{k+1}\nonumber\\
&\leq& \frac{2M}{\pi}\bigg(\sum_{k=2}^\infty k(k+1)|z|^{k-1} +\sum_{k=2}^\infty (k-1)(k+1)|z|^{k+1}+\sum_{k=2}^\infty (k-1)|z|^{k+1}  \bigg)\nonumber\\
&=& \frac{2M|z|}{\pi(1-|z|)^2}\bigg( 2\frac {|z|^2-3|z|+3}{1-|z|}+\frac{|z|^2(3-|z|)}{1-|z|}+|z|^2\bigg).\nonumber
\end{eqnarray}

That is
\be\label{uzzz}
 |u_z(z)-u_z(0) |+| u_{\overline{z}}(z)-u_{\overline{z}}(0)|
\leq \frac{4M|z|}{\pi(1-|z|)^3}\big(-|z|^3+3|z|^2-3|z|+3\big).
\ee
Moreover, the RHS of (\ref{uzzz}) is strictly increasing as a function of $|z|$.\\

Applying Theorem \ref{theo2} (\ref{grad0}), we get
\be
1=J_u(0)=\|D_u(0)\| \lambda(D_u(0)) \leq \frac{4M}{\pi}\lambda(D_u(0)),\nonumber
\ee
which gives that
\be\label{ldu}
 \lambda\left(D_u(0)\right) \geq \frac{\pi}{4M}. 
\ee
We will show that $u$ is univalent in $\D_{r_0}$, where $r_0$ satisfies the following equation
	$$\frac{\pi}{4M}-\frac{4}{\pi}\frac{M r_0}{(1-r_0)^3}\big(-r_0^3+3r_0^2-3r_0+3 \big)=0.$$
Indeed, let $z_1$ and $z_2$ be two distinct points in $D_{r_0}$ and let $[z_1,z_2]$ denote the line segment from $z_1$ to $z_2$.\\

By (\ref{uzzz}), (\ref{ldu}) we have

\begin{eqnarray*}
	|u(z_1)-u(z_2)|&=&\bigg|\int_{[z_1,z_2]}u_z(z)\,dz+u_{\overline{z}}(z)\,d\overline{z}\bigg|\\
	&\geq & \bigg|\int_{[z_1,z_2]}u_z(0)\,dz+u_{\overline{z}}(0)\,d\overline{z}\bigg|\\
	&-&\bigg|\int_{[z_1,z_2]}(u_z(z)-u_z(0))\,dz+(u_{\overline{z}}(z)-u_{\overline{z}}(0))\,d\overline{z}\bigg|\\
	&\geq &\lambda( D_u (0))|z_1- z_2|\\
	&-&\int_{[z_1,z_2]}(|u_z(z)-u_z(0)|+|u_{\overline{z}}(z)-u_{\overline{z}}(0)|)\,|dz|\\
	&>& |z_2-z_1|\bigg\{ \frac{\pi}{4M}-\frac{4}{\pi}\frac{M r_0}{(1-r_0)^3}\big(-r_0^3+3r_0^2-3r_0+3 \big)\bigg\}\\
	&=&0.
\end{eqnarray*}

This implies that $u$ is univalent on $D_{r_0}$.\\

let $\xi=r_0e^{i\theta}\in \partial\D_{r_0}$. Then we infer from (\ref{uzzz}) that  
\begin{eqnarray*}
	|u(\xi)-u(0)|&\geq  &\lambda( D_\Phi (0))r_0
	-\int_{[0,\xi]}(|u_z(z)-u_z(0)|+|u_{\overline{z}}(z)-u_{\overline{z}}(0)|)|dz|\\
	&\geq& \frac{\pi r_0}{4M}-\frac{4M}{\pi}\frac{r_0^2}{(1-r_0)^3}\int_0^1(  -r_0^3t^4+3r_0^2t^3-3r_0t^2+3t )dt\\
	&=& \frac{\pi r_0}{4M}-\frac{4M}{\pi}\frac{r_0^2}{(1-r_0)^3}\bigg(-\frac{r_0^3}{5}+\frac{3}{4}r_0^2-r_0+\frac{3}{2} \bigg)\\
	&=& \frac{4  Mr_0^2}{\pi(1-r_0)^3}\bigg(-r_0^3+3r_0^2-3r_0+3 -(-\frac{r_0^3}{5}+\frac{3}{4}r_0^2-r_0+\frac{3}{2})\bigg)\\
	&=& \frac{4 M r_0^2}{\pi(1-r_0)^3}\big(-\frac{4}{5}r_0^3+\frac{9}{4}r_0^2-2r_0+\frac{3}{2}\big).\\
\end{eqnarray*}
Hence $u(D_{r_0})$ contains an univalent disk $D_{R_0}$ with
$$R_0 \geq  \frac{4M r_0^2}{\pi(1-r_0)^3}\big(-\frac{4}{5}r_0^3+\frac{9}{4}r_0^2-2r_0+\frac{3}{2}\big). $$
\end{proof}

 \section{Schwarz-Type Lemmas for Solutions to Inhomogeneous Biharmonic Equations}
\begin{proof}[Proof of Theorem \ref{theo3}]
 
The solution of (\ref{ibdp}) can be written in the following form
\begin{equation*}
\Phi(z)=\frac{1}{2}(1-|z|^2)P[f+h](z)+K_2[f](z)-G[g](z).\label{phi1}
\end{equation*}
As $z\longmapsto K_2[f](z) $ is $T_2$-harmonic function, then by Theorem \ref{theo1}, we have 
\begin{equation}\label{31}
\bigg|K_2[f](z)-\frac{(1-|z|^2)^3}{(1+|z|^2)^2}K_2[f](0)\bigg| \leq  \frac{2}{\pi}\bigg[(1+|z|^2)\arctan |z|+ \frac{|z|(1-|z|^2)}{1+|z|^2}\bigg]\| f\|_\infty .
\end{equation}
On the other hand, using  the  estimate (\ref{harm}) for the harmonic mapping $P[f+h]$, we get 
\begin{equation}\label{32}
\bigg|P[f+h](z)-\frac{1-|z|^2}{1+|z|^2}P[f+h](0)\bigg|\leq \frac{4}{\pi}\arctan|z|\,\| f+h\|_\infty.
\end{equation}

\noindent Using \cite[inequality 2.3]{Chel}, we obtain
\begin{equation}\label{33}
|G[g](z)| \leq \frac{(1-|z|^2)^2}{64}\| g\|_\infty .
\end{equation}

\noindent Finally as $\displaystyle  K_2[f](0)=\frac{1}{2}P[f](0)$, then if follows from (\ref{31}--\ref{33}) that
\begin{eqnarray*}
\bigg| \Phi(z)-\frac{1}{2}\frac{(1-|z|^2)^3}{(1+|z|^2)^2}P[f](0)-\frac{1}{2}\frac{(1-|z|^2)^2}{1+|z|^2}P[f+h](0)\bigg|&\leq &\frac{2}{\pi}(1-|z|^2)\arctan|z|\| f+h\|_\infty\nonumber\\
&+&\frac{2}{\pi}\Bigg[(|z|^2+1)\arctan |z|+\frac{|z|(1-|z|^2)}{1+|z|^2}\Bigg]\| f\|_\infty\nonumber\\
&+&\frac{(1-|z|^2)^2}{64}\|g\|_\infty.
\end{eqnarray*}
Hence, the proof is complete. 
\end{proof}

\begin{proof}[Proof of Theorem \ref{theo4}]
	The solution of (\ref{ibdp}) can be written in the following form
	 \begin{eqnarray*}
 \Phi(z)&=&\frac{1}{2} (1-|z|^2)P[f+h](z)+ K_2[f](z) - G[g](z).
\end{eqnarray*}
Thus 
\begin{eqnarray}
 \Phi_z(z)&=&\frac{1}{2} \bigg[(1-|z|^2)[P[f+h](z)]_z-\overline{z} P[f+h](z)\bigg]+ K_2[f]_z(z)-G[g]_z(z), \nonumber
\end{eqnarray}
\begin{eqnarray}
 \Phi_{\overline{z}}(z)&=&\frac{1}{2} \bigg[(1-|z|^2)[P[f+h](z)]_{\overline{z}}-z P[f+h](z)\bigg]+K_2[f]_{\overline{z}}(z)-G[g]_{\overline{z}}(z).\nonumber
 \end{eqnarray}
 Therefore
$$ \| D_\Phi (z)\| \leq  \frac{1}{2}(1-|z|^2)\| D_{P[f+h]} (z)\| +|z||P[f+h](z)| +\| D_{K_2[f]}(z)\|+\| D_{G[g]}(z)\|.$$

By Colonna \cite{Col}, we have

\be\label{colo}
\| D_{P[f+h]}(z) \| \leq \frac{4}{\pi} \frac{1}{1-|z|^2}\|f+h\|_\infty.
\ee
It follows from  \cite[Lemma 2.5]{Li},  that
\begin{equation}\label{25}
\| D_{G[g]}(z)\| \leq \frac{23}{48}\|g\|_\infty,
\end{equation}
since  
$$ \ds \int_\D |G_z(z,\omega) g(\omega)|dA(\omega) \leq \frac{23}{6} \|g\|_\infty \text{ and } \int_\D |G_{\overline{z}}(z,\omega) g(\omega)|dA(\omega) \leq \frac{23}{6} \|g\|_\infty.$$
Therefore, combining  (\ref{colo})-(\ref{25}), we obtain
\begin{eqnarray*}
\| D_\Phi (z)\| &\leq & \frac{1}{2}(1-|z|^2)\| D_{P[f+h]} (z)\| +|z||P[f+h](z)| +\| D_{K_2[f]}(z)\|+\| D_{G[g]}(z)\|\\
&\leq &\frac{2}{\pi}\| P[f+h]\|_\infty +|z| \| f+h\|_\infty + \frac{(2+5|z|)(1+|z|^2)}{1-|z|^2} \| f\|_\infty+\frac{23}{48}\| g\|_\infty \\
&\leq &(\frac{2}{\pi} +|z|)\| f+h\|_\infty +\frac{(2+5|z|)(1+|z|^2)}{1-|z|^2} \|f\|_\infty+\frac{23}{48}\| g\|_\infty.
\end{eqnarray*}
\end{proof}

\begin{proof}[Proof of  Theorem \ref{theo5}]
Suppose that $|z|=r$,
it follows from Theorem \ref{theo3} that
\begin{eqnarray*} 
|\Phi(\eta)-\Phi(r\eta)|&\geq &1-\frac{1}{2}(1-r^2) \| f+h\|_\infty -\frac{2}{\pi}\bigg[(r^2+1)\arctan r+\frac{r(1-r^2)}{1+r^2}\bigg]\\
&-&\frac{\| g\|_\infty (1-r^2)^2}{64}-\frac{1}{2}\frac{(1-r^2)^3}{(1+r^2)^2}|P[f](0)| -\frac{1}{2}\frac{(1-r^2)^2}{1+r^2}|P[f+h](0)|
\end{eqnarray*}
Divide by $1-r$ and used the Hospital rule, we obtain 
\begin{eqnarray*} 
\liminf_{r\to 1}\frac{|\Phi(\eta)-\Phi(r\eta)|}{1-r}&\geq &\lim_{r\to 1}\frac{1-\frac{2}{\pi}(r^2+1)\arctan r}{1-r}-\lim_{r\to 1}\frac{2}{\pi}\frac{r(1-r^2)}{(1-r)(1+r^2)}\\
&-&\frac{1}{2}\lim_{r\to 1}(1+r)\| f+h\|_\infty \\
&=&[\varphi'(r)]_{r=1}-\frac{2}{\pi}-\| f+h\|_\infty,
\end{eqnarray*}
where $$\varphi(r)=\frac{2}{\pi}(r^2+1)\arctan r.$$
Then $$\varphi'(r)=\frac{2}{\pi}\big[2r\arctan r+1\big], \text{ so  } \varphi'(1)=1+\frac{2}{\pi}.$$
Hence 
$$\liminf_{r\longrightarrow 1}\frac{|\Phi(\eta)-\Phi(r\eta)|}{1-r}\geq 1-\| f+h\|_\infty.$$
\end{proof}

\section{A Landau-Type Theorem for Solutions to Inhomogeneous Biharmonic Equations }

 First, let us recall the following result.\\
\\
\begin{Thm}[\cite{Chep}, Lemma 1]
Suppose $f$ is a harmonic mapping of $\D$ into $\C$ such that $|f(z)|\leq M$ for all $z\in \D$ and 
$$f(z)=\sum_{n=0}^\infty a_n z^n+\sum_{n=1}^\infty \overline{b}_n \overline{z}^n.$$ 
Then $|a_0|\leq M$ and for all $n\geq 1,$ 
$$|a_n|+|b_n|\leq \frac{4M}{\pi}.$$
\end{Thm}

\begin{proof}[Proof of Theorem \ref{theo6}]
	The solution of (\ref{ibdp}) can be written in the following form
$$\Phi(z)=H_0[f+h](z)+K_2[f](z)-G[g](z). $$

where
\be\label{h0}
H_0[f+h](z)= \frac{1}{2} (1-|z|^2)P[f+h](z).
\ee
Since $P[f+h](z)$ is harmonic in $\D$,  we have
$$H_0[f+h](z)=\sum_{n=0}^\infty a_n z^n+\sum_{n=1}^\infty \overline{b}_n \overline{z}^n$$
Since $|P[f+h](z)|\leq M_2$ for all $z\in \D$, by Theorem F, we have
\be \label{est}
|a_n|+|b_n|\leq \frac{4M_2}{\pi}  \quad \text{ for } n\geq 1.
\ee
By (\ref{h0}) and (\ref{est}),  we have
$$[H_0[f+h](z)]_z=\frac{1}{2}(1-|z|^2)P[f+h]_z(z)-\frac{1}{2}\overline{z}P[f+h](z),$$
and 
$$[H_0[f+h](z)]_{\overline{z}}=\frac{1}{2}(1-|z|^2)P[f+h]_{\overline{z}}(z)-\frac{1}{2}z P[f+h](z).$$
Thus
\begin{eqnarray}
& & \bigg|H_0[f+h]_z(z)-H_0[f+h]_z(0)\bigg|+\bigg|H_0[f+h]_{\overline{z}}(z)-H_0[f+h]_{\overline{z}}(0)\bigg|\nonumber  \\
&\leq&|z| |P[f+h](z)|+\frac{1}{2} \bigg( \big| P[f+h]_z(z)-P[f+h]_z(0)\big|+P[f+h]_{\overline{z}}(z)-P[f+h]_{\overline{z}}(0)\big| \bigg)\nonumber\\
&+&\frac{1}{2}|z|^2\bigg( \big| P[f+h]_z(z)\big|+\big|P[f+h]_{\overline{z}}(z)\big| \bigg)\nonumber\\
&\leq& M_2|z|+\frac{1}{2}\sum_{n\geq 2}n(|a_n|+|b_n|) |z|^{n-1} +\frac{1}{2}|z|^2 \sum_{n\geq 1}n(|a_n|+|b_n|) |z|^{n-1}  \nonumber\\
&\leq& M_2|z| +\frac{1}{2}(1+|z|^2) \sum_{n\geq 2}n(|a_n|+|b_n|) |z|^{n-1}+\frac{1}{2}|z|^2(|a_1|+|b_1|) \nonumber\\
&\leq & M_2|z|+\frac{2M_2|z|}{\pi}\bigg[\frac{(2-|z|)(1+|z|^2)}{(1-|z|)^2}+|z| \bigg].\label{h0z}
\end{eqnarray}

Since $K_2$ is $T_2$-harmonic, then 
	$$ K_2(z)=\sum_{k=0}^\infty c_k F(-1,k-1;k+1;|z|^2)z^k+
\sum_{k=1}^\infty c_{-k} F(-1,k-1;k+1;|z|^2)\overline{z}^k.$$
Let us denote $$K_2^0[f](z):=K_2(z)-c_0F(-1,-1;1;|z|^2)=K_2[f](z)-c_0(1+|z|^2). $$
Hence
$$ K_2[f]=K_2^0[f](z)+c_0(1+|z|^2).$$

\begin{eqnarray}
& & \bigg|K_2[f]_z(z)-K_2[f]_z(0)\bigg|+\bigg|K_2[f]_{\overline{z}}(z)-K_2[f]_{\overline{z}}(0)\bigg|\nonumber  \\
&\leq&\bigg|K_2^0[f]_z(z)-K_2^0[f]_z(0)\bigg|+\bigg|K_2^0[f]_{\overline{z}}(z)-K_2^0[f]_{\overline{z}}(0)\bigg|+2|c_0||z| \nonumber
\end{eqnarray}
By (\ref{c0}), we have
\be
2|c_0| \leq M_1. \nonumber
\ee
On the other hand, as $K_2^0(f)$ is a $T_2$-harmonic function with $K_2^0(0)=0$, it yields
\be |K_2^0[f]_z(z)-K_2^0[f]_z(0) |+| K_2^0[f]_{\overline{z}}(z)-K_2^0[f]_{\overline{z}}(0)|
\leq \frac{4M_1|z|}{\pi(1-|z|)^3}\big(-|z|^3+3|z|^2-3|z|+3\big).\nonumber
\ee

Thus
\be\label{kef}
\bigg|K_2[f]_z(z)-K_2[f]_z(0)\bigg|+\bigg|K_2[f]_{\overline{z}}(z)-K_2[f]_{\overline{z}}(0)\bigg|\leq  \frac{4M_1|z|}{\pi(1-|z|)^3}\big(-|z|^3+3|z|^2-3|z|+3\big)+M_1|z|.
\ee
Let 
$$\psi_1(z)=\bigg|\frac{1}{16\pi}\int_\D g(\omega)(G_z(z,\omega)-G_z(0,\omega))dA(\omega)\bigg|$$
and 
$$\psi_2(z)=\bigg|\frac{1}{16\pi}\int_\D g(\omega)(G_{\overline{z}}(z,\omega)-G_{\overline{z}}(0,\omega))dA(\omega)\bigg|.$$

Then by \cite[Inequality $(3.6)$]{Chel}, we have 
\begin{equation}
\psi_1(z)\leq \bigg(\frac{1-|z|^2}{16}+\frac{43}{120}\bigg)\| g\|_\infty |z|,\label{psi1}
\end{equation}
and 
$$\psi_2(z)\leq \bigg(\frac{1-|z|^2}{16}+\frac{43}{120}\bigg)\| g\|_\infty |z|.$$

Now, it follows from  (\ref{h0z}), (\ref{kef}) and (\ref{psi1})  that 

\begin{eqnarray}
& &|\Phi_z (z)-\Phi_z (0)|+|\Phi_{\overline{z}} (z)-\Phi_{\overline{z}} (0)|\nonumber\\
&\leq & M_2|z|+\frac{2M_2|z|}{\pi}\bigg[\frac{(2-|z|)(1+|z|^2)}{(1-|z|)^2}+|z| \bigg]+\nonumber\\
&+& \frac{4M_1|z|}{\pi(1-|z|)^3}\big(-|z|^3+3|z|^2-3|z|+3\big)+M_1|z|+\psi_1(z)+\psi_2(z)\nonumber\\
&\leq & \sigma(z),\label{sigg}
\end{eqnarray}
where 
$$\sigma(|z|):=(M_1+M_2+\frac{101}{120} M_3)|z|+\frac{2M_2|z|}{\pi}\bigg[\frac{(2-|z|)(1+|z|^2)}{(1-|z|)^2}+|z| \bigg]+\frac{4M_1|z|}{\pi(1-|z|)^3}\big(-|z|^3+3|z|^2-3|z|+3\big)$$
Remark that not only $\sigma(|z|)$ is increasing but also  $\ds\frac{\sigma(|z|)}{|z|}$ is  increasing with respect to $|z|$ in $[0,1)$.\\

By Theorem \ref{theo4}, we obtain that 
$$1=J_\Phi (0)=\| D_\Phi (0)\|\lambda( D_\Phi (0))\leq \lambda( D_\Phi (0))\bigg(\frac{4}{\pi}M_1+\frac{2}{\pi}M_2+\frac{23}{48}M_3\bigg)$$
yields
\begin{equation}\displaystyle
\lambda( D_\Phi (0))\geq \frac{1}{\frac{4}{\pi}M_1+\frac{2}{\pi}M_2+\frac{23}{48}M_3}.\label{lambd}
\end{equation}

We will prove $\Phi$ is univalent in $\D_{r_0}$, where $r_0$ satisfies the following
equation:
\begin{equation}
(\frac{4}{\pi} M_1+\frac{2}{\pi}M_2+\frac{23}{48}M_3)\sigma(r_0)=1.\label{eq0}
\end{equation}
We choose two points $z_1\neq z_2 \in \D_{r_0}$, 
and by  (\ref{sigg})-(\ref{eq0}), we obtain
\begin{eqnarray*}
|\Phi(z_1)-\Phi(z_2)|&=&\bigg|\int_{[z_1,z_2]}\Phi_z(z)dz+\Phi_{\overline{z}}(z)d\overline{z}\bigg|\\
&\geq & \bigg|\int_{[z_1,z_2]}\Phi_z(0)dz+\Phi_{\overline{z}}(0)d\overline{z}\bigg|\\
&-&\bigg|\int_{[z_1,z_2]}(\Phi_z(z)-\Phi_z(0))dz+(\Phi_{\overline{z}}(z)-\Phi_{\overline{z}}(0))d\overline{z}\bigg|\\
&\geq &\lambda( D_\Phi (0))|z_1- z_2|\\
&-&\int_{[z_1,z_2]}(|\Phi_z(z)-\Phi_z(0)|+|\Phi_{\overline{z}}(z)-\Phi_{\overline{z}}(0)|)|dz|\\
&>& |z_1- z_2|\bigg(\frac{1}{\frac{4}{\pi} M_1+\frac{2}{\pi}M_2+\frac{23}{48}M_3}-\sigma(r_0)\bigg)\\ 
&=&0.
\end{eqnarray*}
Thus, from the arbitrariness of $z_1$ and $z_2$, the univalence of $\Phi$ follows.\\

Now, we will prove $\Phi(\D_{r_0})$ contains an univalent disk $\D_{R_0}$
To reach this goal, let $\xi=r_0e^{i\theta}\in \partial\D_{r_0}$. As the mapping $\frac{\sigma(|z|)}{|z|}$ is increasing, we deduce 

\be
\int_{[0,\xi]} \sigma(|z|)|dz| \leq \frac{\sigma(r_0)r_0}{2}.\nonumber
\ee
Therefore,

\begin{eqnarray*}
|\Phi(\xi)-\Phi(0)|&=&\bigg|\int_{[0,\xi]}\Phi_z(z)dz+\Phi_{\overline{z}}(z)d\overline{z}\bigg|\\
&\geq & \bigg|\int_{[0,\xi]}\Phi_z(0)dz+\Phi_{\overline{z}}(0)d\overline{z}\bigg|\\
&-&\bigg|\int_{[0,\xi]}(\Phi_z(z)-\Phi_z(0))dz+(\Phi_{\overline{z}}(z)-\Phi_{\overline{z}}(0))d\overline{z}\bigg|\\
&\geq &\lambda( D_\Phi (0))r_0 -\int_{[0,\xi]}\sigma(z)|dz|\\
&\geq&  \sigma(r_0)r_0-\frac{\sigma(r_0)r_0}{2}\\
&=&\frac{\sigma(r_0)r_0}{2}\\
&=&\frac{r_0}{\frac{8}{\pi} M_1+\frac{4}{\pi}M_2+\frac{23}{24}M_3}.
\end{eqnarray*}

Hence  $\Phi(\D_{r_0})$ contains an univalent disk $\D_{R_0}$ with the
radius $R_0$ satisfying
$$R_0\geq \frac{r_0}{\frac{8}{\pi} M_1+\frac{4}{\pi}M_2+\frac{23}{24}M_3}. $$
\end{proof}

\end{document}